\title{K-Semistability for irregular Sasakian manifolds}
\author{Tristan C. Collins}
\author{G\'abor Sz\'ekelyhidi}
\theoremstyle{plain}
\newtheorem{thm}{Theorem}
\newtheorem{prop}{Proposition}[section]
\newtheorem{defn}{Definition}[section]
\newtheorem{lem}{Lemma}[section]
\newtheorem{cor}{Corollary}
\theoremstyle{remark}
\newcommand{\del}{\partial} 
\newcommand{\dbar}{\overline{\del}}
\newcommand{\ddb}{i\del\dbar}
\newcommand{\C}{\mathbb{C}}
\newcommand{\mt}{\mathfrak{t}}
\newcommand{\Spec}{\text{Spec }}
\begin{document}

\maketitle 
\begin{abstract}
	We introduce a notion of K-semistability for Sasakian manifolds. This
	extends to the irregular case the orbifold K-semistability of
	Ross-Thomas. Our main result is
	that a Sasakian manifold with constant
	scalar curvature is necessarily K-semistable.   As an application, we show how one can recover the
volume minimization results of Martelli-Sparks-Yau, and the
Lichnerowicz obstruction of Gauntlett-Martelli-Sparks-Yau from this
point of view.
\end{abstract}

\section{Introduction}

Determining necessary and sufficient conditions for the existence of
constant scalar curvature K\"ahler (cscK) metrics on a compact K\"ahler manifold
$X$ is an important open problem in complex geometry, requiring methods
from algebraic geometry and partial differential equations.  When
$c_{1}(X)$ represents a negative or trivial cohomology class then
Yau~\cite{Y2} proved
that there exist K\"ahler-Einstein metrics 
in classes proportional to $c_1(X)$ (see also Aubin~\cite{Aubin} for the
$c_1(X) < 0$ case). However when
the first Chern class is postive, or we are looking at general K\"ahler
classes, then there are obstructions to existence.  
A famous conjecture of Yau  says that the
existence of K\"ahler-Einstein metrics, when $c_1(X) > 0$, 
should be equivalent to some geometric invariant theory (GIT) notion of
stability for the underlying variety \cite{Y}. Tian~\cite{Tian}
introduced the notion of K-stability, and showed that it is a necessary
condition for existence. The notion of K-stability was refined by
Donaldson~\cite{D2}, and extended to any K\"ahler class given by the
first Chern class $c_1(L)$ of an ample line bundle $L$. The
Yau-Tian-Donaldson conjecture states that K-stability (or some
refinement of it) of a polarized
manifold $(X,L)$ is equivalent to the existence of a cscK metric in
$c_1(L)$. For more on this very active area of
research, see the survery of Phong-Sturm~\cite{PSstab}, 
and the references therein.

From the works of Donaldson~\cite{D}, Stoppa~\cite{Sto} and
Mabuchi~\cite{Mab2} we know that the existence of a cscK metric implies
K-stability. Part of this work was subsequently
generalized by Ross-Thomas~\cite{RT} to the case of orbifolds 
with cyclic quotient
singularities, where they showed that the existence of a cscK orbifold
metric in $c_{1}(L)$ implies the K-\emph{semi}stability of the polarized
orbifold $(X,L)$.  As we will recall below, the work of
Ross-Thomas can be phrased as a result about quasi-regular Sasakian manifolds,
and the goal of the present paper is to extend this to the
irregular case. 

Sasakian geometry is an important, odd-dimensional conterpart of K\"ahler
geometry.    Recently, the subject of Sasakian geometry has garnered a
great deal of interest partly due to its connections to the AdS/CFT
correspondence in theoretical physics \cite{JMM}, which provides a
detailed correspondence between certain conformal field theories and
geometries (see also \cite{Keh,KW, AFHS, MP}).  
It is thus a natural problem to determine
necessary and sufficient conditions for the existence of Sasaki-Einstein
metrics, and more generally, Sasakian metrics of constant scalar curvture
(or even the analogs of extremal metrics \cite{BGS}).
As in the K\"ahler case, this is well understood when the basic first
Chern class is negative or zero \cite{EKA, SWZ}.  However, when the
basic first  Chern class is positive, there are obstructions to
existence \cite{FOW, GMSY, MSY}.  It is expected that a suitably
generalized version of the conjecture of Yau \cite{Y} should hold.  Some
sufficient conditions for the existence of Sasaki-Einstein metrics have
been provided in \cite{TCC} in the spirit of \cite{PS, PSSW}, by
examining conditions which are sufficient to guarantee the convergence
of the Sasaki-Ricci flow, introduced in \cite{SWZ}.  However, these
conditions are not obviously GIT related.  Recent work on obstructions
to the existence of Sasaki-Einstein metrics has provided the first hints
of a connection to stability for Sasakian manifolds; see, for example,
\cite{MSY, MSY1} and the references therein.  In \cite{MSY} the authors
show that a Sasakian manifold admits a Sasaki-Einstein metric only when
the Reeb vector field minimizes the volume functional.  Moreover, when
the minimizing Reeb vector field is quasi-regular, they show that volume
minimization is equivalent to the vanishing of the classical Futaki
invariant on the  quotient orbifold.  This is a remarkable result,
translating a problem which seems algebro-geometric in nature into a
dynamical problem which can be studied analytically.

In the current work, our primary interest is to develop a notion of
K-stability for a Sasakian manifold $S$. When the Sasakian manifold is
quasi-regular, then this is equivalent to the work of Ross-Thomas~\cite{RT} on
K-stability for orbifolds. The question of whether there is a suitable
extension of this to the irregular case has been posed in several places
in the literature \cite[Problem 7.1]{S}, \cite{MSY, FO}.
In this paper we provide such an
extension, and prove that the
existence of a constant scalar curvature Sasakian metric implies the
K-semistability of $S$. Our main result is thus the following corollary of
Theorem~\ref{Don thm}. 

\begin{cor}\label{main cor} Let $(S,g)$ be a Sasakian manifold with Reeb
	vector field $\xi$.  If $g$ has constant scalar curvature, then
	the cone $(C(S), \xi)$ is K-semistable.  
\end{cor}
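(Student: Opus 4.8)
The plan is to deduce this directly from Theorem~\ref{Don thm}, which I take to be the Sasakian analogue of Donaldson's lower bound on the Calabi functional: for every admissible test configuration $\mathcal{C}$ of the cone $(C(S),\xi)$, the Donaldson--Futaki invariant $DF(\mathcal{C})$ satisfies an inequality of the form
\[
\|S_g - \bar S\|_{L^2(S)} \;\geq\; -\frac{DF(\mathcal{C})}{\|\mathcal{C}\|},
\]
where $S_g$ denotes the (transverse) scalar curvature of the Sasakian metric $g$, $\bar S$ its average, and $\|\mathcal{C}\|$ a positive norm attached to the test configuration. Recall that K-semistability of $(C(S),\xi)$ means precisely that $DF(\mathcal{C}) \geq 0$ for every such test configuration, so the corollary amounts to showing the right-hand side above is nonpositive under the hypothesis of constant scalar curvature.

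First I would record the elementary but essential observation that constant scalar curvature forces the left-hand side to vanish. The scalar curvature of a Sasakian metric and the scalar curvature of its transverse K\"ahler structure differ by a fixed constant depending only on the dimension, so $g$ has constant scalar curvature if and only if the transverse metric does; in either formulation the integrand $S_g - \bar S$ is identically zero, whence $\|S_g - \bar S\|_{L^2(S)} = 0$.

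Feeding this into the inequality of Theorem~\ref{Don thm} yields $0 \geq -DF(\mathcal{C})/\|\mathcal{C}\|$ for every test configuration $\mathcal{C}$. Since $\|\mathcal{C}\| > 0$, this is equivalent to $DF(\mathcal{C}) \geq 0$, which is exactly the defining condition of K-semistability of the cone $(C(S),\xi)$. As $\mathcal{C}$ was arbitrary, the conclusion follows.

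The only genuine point of care at this last step, and the place where I expect bookkeeping to be delicate rather than difficult, is matching conventions: one must verify that the normalization and sign of the Donaldson--Futaki invariant and of the norm $\|\mathcal{C}\|$ appearing in Theorem~\ref{Don thm} agree with those in the definition of K-semistability, and that the class of test configurations over which Theorem~\ref{Don thm} establishes the inequality coincides with the class entering that definition. Once these are aligned, no further analysis is required, since all of the real difficulty---constructing the degeneration argument that produces the lower bound in the irregular Sasakian setting, where no quotient orbifold is available---is already carried out in Theorem~\ref{Don thm} itself.
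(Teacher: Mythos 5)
Your proposal is correct and is essentially the paper's own argument: the paper deduces the corollary immediately from Theorem~\ref{Don thm} together with the Section~\ref{sec:cal} identity $Cal_{Y}(\Omega) = \frac{1}{4(2n+2)^{1/2}}Cal_{S}(g)$ (which vanishes when $g$ has constant scalar curvature) and the positivity of $c(n)$. The only cosmetic difference is that the paper states the bound multiplicatively, $\Vert\eta\Vert_\xi\cdot Cal_Y(\Omega) \geq -c(n)\,Fut(Y_0,\xi,\eta)$, so no positivity of the norm is needed to conclude $Fut \geq 0$, sidestepping the convention-matching caveat you raise.
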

 
As already suggested by Sparks~\cite{S}, the obvious approach is to
approximate a given irregular Sasakian manifold with a sequence of regular
ones, and attempt to take a limit of the obstructions provided by the
results of Ross-Thomas in the orbifold case. The main difficulty with this
is that when the Reeb vector field is irrational, then it is not clear how
to generalize the usual setup of K-stability to the non-Hausdorff quotient
space. If instead we were to work directly with the odd dimensional Sasakian
manifolds, then the methods of algebraic geometry are no longer available
to define test-configurations and their Futaki invariants. Because of this
we work on
the cone over the Sasakian manifold, which is an affine variety with an
isolated singular point. These affine varieties come with the action of a
torus $T$,
and a Reeb field on the Sasakian manifold corresponds to an element in the
Lie algebra of $T$. We can naturally define test-configurations for affine
varieties with a $T$-action, and for any Reeb field $\xi$ on the underlying
Sasakian manifold we define a Futaki invariant of such a
test-configuration. The crucial points are that this Futaki invariant (for
a given test-configuration) varies continuously with the Reeb field $\xi$,
and when $\xi$ is rational, then it matches up with Ross-Thomas's Futaki
invariant for test-configurations in the orbifold case. We can then
use an approximation argument to extend Ross-Thomas's results to the case
of irrational Reeb fields. 

Our definition of the Futaki invariant uses the Hilbert series as opposed
to the usual Riemann-Roch expansions. This point of view was already used
in \cite{MSY} in the form of the index character, to compute the volume of
a Sasakian manifold. The main advantage is that for orbifolds the
Riemann-Roch expansions contain periodic terms, which become unmanageable
as we approach an irrational Reeb field, whereas in the index character
these periodic terms are not visible. This allows us to show the continuity
of the Futaki invariant with respect to the Reeb field. 

As an application, we recover the results of \cite{MSY} algebraically
by showing that
in the situation they considered, volume minimization is equivalent to
K-semistability for product test configurations. As a second
application, we show that the Lichnerowicz obstruction to existence of
Sasaki-Einstein metrics  studied in \cite{GMSY} can be interpreted in
terms of K-semistability for deformations arising from the Rees algebra
of a principal ideal of the coordinate ring.  Indeed, for rational Reeb
fields, the Lichnerowicz obstruction was interpreted in terms of slope
stability for the quotient orbifold in \cite{RT}.  Our computations
recover this result, and extend it to irrational Reeb fields by
establishing an explicit formula for the Donaldson-Futaki invariant of
the Rees deformation.  It is worth pointing out that both 
volume minimization and the Lichnerowicz
obstruction has a natural interpretation in terms of 
the AdS/CFT dual field theory;  see \cite{MSY1, GMSY} for details.  This
presents an interesting opportunity to attempt to recover the necessity
of K-semistability for existence of Sasaki-Einstein metrics as the geometric
dual of a physical constraint in the dual conformal field theory.
Such a connection would be beneficial from both a physical and
mathematical standpoint.

We begin our developments in section 2 with a brief review of Sasakian
manifolds, and  affine schemes polarized by a Reeb vector field.  We also briefly recall some facts about orbifolds and orbifold K-stability .  In section 3 we define the
Calabi functional on a polarized affine variety equipped with a K\"ahler
metric compatible with the Reeb field.  In section 4 we discuss the
index character and the Donaldson-Futaki invariant of a polarized affine
variety.  In section 4 we define test configurations for polarized
affine varieties, and K-semistability.  We then use the results of
sections 2, 3 and 4 to prove our main theorem.  Finally, in section 6 we
show that the volume minimization results of Martelli, Sparks and Yau
\cite{MSY}, and the Lichnerowicz obstruction of \cite{GMSY} arise from
K-stability considerations for product test configurations, and the Rees
algebra respectively.

\vspace{\baselineskip}
\noindent
{\bf Acknowledgements} The first author would like to thank his advisor
Professor D.H. Phong for his support and encouragement, as well as Zack
Maddock, You Qi, and Michael McBreen for many helpful conversations. The
second author would like to thank Professor S.T. Yau for introducing him
to Sasakian manifolds, and J. Ross for helpful discussions on
orbifold K-stability.

\section{Background}
\subsection{Sasakian geometry}\label{sec:background}
In this section we will recall some aspects of Sasakian geometry that we
will use. 
There are various points of view on the subject, and for a thorough
treatment see Boyer-Galicki~\cite{BG}. 

\begin{defn} A Sasakian manifold is a smooth Riemannian manifold $(S,g)$,
	$\dim_{\mathbb{R}}S =2n+1$, such that the metric cone $(C(S),
	\bar{g}) := (S \times \mathbb{R}_{\geq 0}, dr^{2} + r^{2}g)$ is
	K\"ahler.  Note that $S$ is canonically imbedded in $C(S)$ as
	the set $\{r=1\}$.  
\end{defn}

A Sasakian manifold inherits a number of key properties from its
K\"ahler cone.  In particular, an important role is played by the Reeb
vector field.  
\begin{defn} The Reeb vector field is $\xi =
	J(r\partial_{r})|_{\{r=1\}}$, where $J$ denotes the integrable
	complex structure on $C(S)$.  
\end{defn}

The Reeb vector field is a unit length, real holomorphic, Killing vector
field whose integral curves foliate $S$ by geodesics.  Sasakian manifolds
are roughly categorized by their Reeb vector fields.  When the intregral
curves of $S$ are all compact, the action of the Reeb vector field
integrates to a $U(1)$ action.  A Sasakian manifold is said to be
\emph{regular} if this $U(1)$ action is free; otherwise, it is said to
be \emph{quasi-regular}.  When the integral curves of the Reeb vector
field are not all compact, the Sasakian manifold is said to be
\emph{irregular}.  The regular and quasi-regular Sasakian manifolds are
well understood, owing to the following theorem of Boyer and Galicki.

\begin{thm}[\cite{BG1} Theorem 2.4]\label{embedding} Let $S$ be a
	compact regular or quasi-regular Sasakian manifold.  The space
	of leaves of the Reeb foliation $Z$ is a K\"ahler manifold or
	orbifold, respectively.  Moreover, $Z$ is a normal, projective,
	$\mathbb{Q}$-factorial algebraic variety.  
\end{thm}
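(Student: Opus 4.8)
The plan is to treat the regular and quasi-regular cases uniformly by analyzing the $U(1)$-action generated by the Reeb flow, producing the K\"ahler (orbifold) structure on $Z$ together with an ample orbifold line bundle, from which projectivity and the remaining algebro-geometric properties follow.

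First I would establish the quotient structure. In the regular or quasi-regular case the Reeb orbits are all closed, so the Reeb flow integrates to an effective $U(1)$-action on the compact manifold $S$; in the regular case it is free, and in the quasi-regular case it is locally free, i.e.\ all isotropy groups are finite and necessarily cyclic, being closed subgroups of $U(1)$. By the slice theorem for compact group actions, the leaf space $Z = S/U(1)$ is then a smooth manifold in the regular case, and in the quasi-regular case a smooth orbifold whose local uniformizing charts are of the form $\mathbb{C}^n/\Gamma$ for finite cyclic $\Gamma$; in particular $Z$ has only cyclic quotient singularities.

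Next I would push the transverse K\"ahler geometry down to $Z$. The Sasakian condition equips the Reeb foliation with a transverse complex structure and a transverse K\"ahler metric $g^T$, obtained by restricting the cone metric and complex structure to the normal bundle of the foliation. Because $\xi$ is Killing and real holomorphic for the cone, both $g^T$ and the transverse complex structure are invariant under the Reeb flow, hence descend to a K\"ahler metric and an (orbifold) complex structure on $Z$. The contact form $\eta$ dual to $\xi$ satisfies $d\eta = 2\,\pi^{*}\omega_{Z}$, exhibiting $S \to Z$ as a principal $U(1)$-orbibundle whose curvature is the transverse K\"ahler form; this is the orbifold Boothby--Wang construction.

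The central step, and the main obstacle, is projectivity. The key point is that $[\omega_{Z}]$ is a Hodge class: since $S \to Z$ is a principal $U(1)$-orbibundle, its orbifold first Chern class is a nonzero rational multiple of $[\omega_{Z}]$, so $[\omega_{Z}]$ lies in the image of $H^{2}_{\mathrm{orb}}(Z,\mathbb{Q})$ and, after scaling, is the first Chern class of an ample orbifold line bundle $L$. One then invokes the Baily embedding theorem, the orbifold version of the Kodaira embedding theorem, to conclude that a compact K\"ahler orbifold carrying such an $L$ is projective algebraic; this is where the careful orbifold analysis is concentrated, since the Kodaira vanishing and base-point-freeness arguments must hold across the uniformizing charts. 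Finally, normality and $\mathbb{Q}$-factoriality follow from the local model $\mathbb{C}^n/\Gamma$: finite quotients of smooth varieties are normal, and since every Weil divisor pulls back to a Cartier divisor on the smooth cover, a multiple descends to a Cartier divisor on $Z$, giving $\mathbb{Q}$-factoriality.
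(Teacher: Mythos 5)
The paper does not prove this statement at all: it is imported verbatim from Boyer--Galicki \cite{BG1} (Theorem 2.4), so there is no internal proof to compare against, only the cited source. Your proposal is correct and is essentially the standard argument behind that citation: the orbifold Boothby--Wang quotient via the (locally) free $U(1)$-action, descent of the transverse K\"ahler structure so that $d\eta = 2\pi^*\omega_Z$ realizes $[\omega_Z]$ as a rational (orbifold Chern) class, Baily's embedding theorem \cite{Ba} for projectivity and normality, and the local model $\mathbb{C}^n/\Gamma$ giving normality and $\mathbb{Q}$-factoriality of cyclic quotient singularities.
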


The results of Rukimbira \cite{R} imply that any irregular Reeb vector
field can be approximated by quasi-regular Reeb fields.  In particular,
every Sasakian manifold admits at least one quasi-regular Reeb vector
field.  Combining this with Theorem~\ref{embedding}, we see that for any
Sasakian manifold $S$, the cone over $S$ is an affine variety with
an isolated singularity at $0$.  With this observation in hand, for the
remainder of this paper, we will work primarily with affine varieties,
smooth away from $0$, and comment on the Sasakian aspects of our work
only where pertinent. 

When defining test-configurations, we will need to consider degenerations
of an affine variety into possibly non-reduced schemes, and we will need an
algebro-geometric formulation of the notion of a Reeb vector field. 
Suppose that $Y \subset \C^{N}$ is an affine scheme, with a torus $T
\subset Aut(Y)$. Let us write $\mt := \text{Lie}(T_\mathbb{R})$ for the Lie
algebra of the maximal compact sub-torus.  Let
$\mathcal{H}$ denote the global sections of the structure sheaf of $Y$,
and write
\begin{equation*} \mathcal{H} = \oplus_{\alpha \in \mt^{*}}
	\mathcal{H}_{\alpha}
\end{equation*}
for the weight decomposition under the action of $T$. 

\begin{defn}\label{Reeb field defn} A vector $\xi \in \mt$ is a
	\emph{Reeb vector field} if 
	for each non-empty weight space $\mathcal{H}_{\alpha}$, with
	$\alpha\not=0$, we have
	$\alpha(\xi) > 0$, i.e. $\xi$ acts with positive weights on the
	non-constant functions on $Y$.  We will often 
	identify the vector $\xi$
	with the vector field it induces on $Y$.  We define the
	\emph{Reeb cone} to be
	\begin{equation*} \mathcal{C}_{R} :=\{ \xi \in \mt \big| \xi
		\text{ is a Reeb field } \} \subset \mt. 
	\end{equation*}
	Since $\mathcal{H}$ is finitely generated, $\mathcal{C}_R$
	is a rational, convex, polyhedral cone, and for any $\xi\in
	\mathcal{C}_R$ there is an $\epsilon > 0$ such that $\alpha(\xi)
	\geqslant \epsilon |\alpha|$ for all non-empty weight spaces. 
	We say that $\xi$ is rational if there exists $\lambda
	\in \mathbb{R}_{>0}$ such that $\alpha(\lambda \xi) \in
	\mathbb{N}$ for every non-empty weight space.
	Otherwise, we say that $\xi$ is irrational.  
\end{defn}

Note that any homogeneous variety admits a Reeb field generated by the
usual $\C^{*}$ action on $\C$.  In analogy with this case, we shall call
an affine scheme $Y$ with a holomorphic torus action admitting a Reeb
vector field a \emph{polarized affine scheme}.  An affine scheme $Y$ may
admit more than one Reeb field; choosing a Reeb vector field $\xi$ is
analogous to fixing a polarization for a projective scheme.  For the
most part, we shall consider only polarized affine \emph{varieties}.
The next lemma shows that Reeb vector fields are always induced from Lie
algebra actions on the ambient space, possibly after increasing the
codimension of the embedding.

\begin{lem}\label{lin act} Let $Y \subset \C^{N}$ be an affine scheme,
	and let $T$ be a torus acting holomorphically on $Y$.  Then
	there exists an embedding $Y \hookrightarrow \C^{N'}$ and a
	torus $T' \subset GL(N',\C)$ such that the multiplicative action
	of $T'$ on $\C^{N'}$ induces the action of $T$ on $Y$.
\end{lem}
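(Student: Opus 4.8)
The plan is to produce the embedding directly from the coordinate ring $\mathcal{H}$ of $Y$ together with its weight decomposition under $T$. Since $Y \subset \C^{N}$ is affine, $\mathcal{H}$ is a finitely generated $\C$-algebra, and the $T$-action makes it into a direct sum $\mathcal{H} = \oplus_{\alpha \in \mt^{*}} \mathcal{H}_{\alpha}$ of $T$-stable weight spaces. The goal is to find finitely many weight vectors that generate $\mathcal{H}$ as an algebra; the coordinates they define will then give a $T$-equivariant closed embedding of $Y$ into a vector space on which $T$ acts diagonally, which is exactly a linear (multiplicative) action by a subtorus of $GL(N',\C)$.

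First I would choose a finite set of algebra generators $f_{1}, \ldots, f_{k}$ of $\mathcal{H}$. These need not be weight vectors, so the key step is to enlarge them to a finite-dimensional $T$-invariant subspace. Here I would invoke the standard fact that an algebraic action of $T$ on the affine scheme $Y$ makes $\mathcal{H}$ into a rational $T$-module: the comorphism of the action map $T \times Y \to Y$ equips $\mathcal{H}$ with the structure of a $\C[T]$-comodule, and every element of a comodule lies in a finite-dimensional subcomodule. Thus each $f_{i}$ lies in a finite-dimensional $T$-stable subspace $W_{i} \subset \mathcal{H}$; set $W = W_{1} + \cdots + W_{k}$. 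Since $T$ is diagonalizable, $W$ decomposes into weight spaces, and I would pick a basis $g_{1}, \ldots, g_{N'}$ of $W$ consisting of weight vectors, say $g_{j} \in \mathcal{H}_{\alpha_{j}}$. Because $W$ contains all the $f_{i}$, the $g_{j}$ generate $\mathcal{H}$.

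The weight vectors $g_{1}, \ldots, g_{N'}$ define a morphism $\iota : Y \to \C^{N'}$, $y \mapsto (g_{1}(y), \ldots, g_{N'}(y))$, and since they generate $\mathcal{H}$ the induced map $\C[\C^{N'}] \to \mathcal{H}$ is surjective, so $\iota$ is a closed embedding. I would then let $T$ act on $\C^{N'}$ by the diagonal representation $\rho(t) = \mathrm{diag}(\chi_{\alpha_{1}}(t)^{-1}, \ldots, \chi_{\alpha_{N'}}(t)^{-1})$, where $\chi_{\alpha_{j}}$ denotes the character of weight $\alpha_{j}$; a direct check on coordinates using $g_{j}(t \cdot y) = \chi_{\alpha_{j}}(t)^{-1} g_{j}(y)$ shows $\iota(t \cdot y) = \rho(t)\,\iota(y)$, so $\iota$ is $T$-equivariant. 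Taking $T' \subset GL(N',\C)$ to be the image of $\rho$, a subtorus of the diagonal torus since it is the image of a torus under an algebraic homomorphism, the multiplicative action of $T'$ on $\C^{N'}$ restricts along $\iota$ to the given action of $T$ on $Y$.

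I expect the only genuine obstacle to be the justification that $\mathcal{H}$ is a rational $T$-module, i.e.\ that the generators can be enclosed in a finite-dimensional $T$-invariant subspace. This is the one point where one must use that the action is algebraic rather than merely holomorphic, and it is precisely what allows the action to be simultaneously diagonalized and linearized. The remaining steps, namely the diagonalizability of $W$ under the torus, the closedness of $\iota$, and the verification of equivariance, are routine.
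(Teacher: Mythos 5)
Your proof is correct and is essentially the paper's argument: both take a finite set of weight-vector generators of the coordinate ring $\mathcal{H}$ and use them to define a $T$-equivariant closed embedding of $Y$ into $\C^{N'}$ with $T$ acting diagonally by the corresponding characters. The only difference is that you justify the existence of homogeneous generators via the rational-module (comodule) structure, a detail the paper simply asserts by decomposing the images of the ambient coordinates into their weight components.
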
 
\begin{proof} Let $Y$ be cut out by the ideal $I \subset
	\C[x_{1},\dots,x_{N}]$, so that $Y = \Spec \mathcal{H}$ for
	$\mathcal{H}= \C[x_{1},\dots,x_{N}]/I$.  The torus $T$ induces a
	decomposition 
	\begin{equation*} \mathcal{H} = \oplus_{\alpha \in
		\mt^{*}} \mathcal{H}_{\alpha},
	\end{equation*} 
	and
	the images of $x_{1},\dots,x_{n}$ generate
	$\mathcal{H}$.  In particular, there exists a finite set
	of homogenous generators $u_{1},\dots u_{N'} \in
	\mathcal{H}$, with weights
	$\alpha_{1},\dots,\alpha_{N'}$.  Consider the map
	\begin{equation*} \begin{array}{ccc}
		\C[x_{1},\dots,x_{N'}]& \longrightarrow
		&\mathcal{H} \\ x_{i} &\longmapsto & u_{i}
	\end{array} \end{equation*} 
	Define an action of $T$ on
	$\C[x_{1},\dots,x_{N'}]$, where $T$ acts on $x_{i}$ with
	weight $\alpha_{i}$.  We get an exact sequence
	\begin{equation*} 0 \longrightarrow I' \longrightarrow
		\C[x_{1},\dots,x_{N'}] \longrightarrow
		\mathcal{H} \longrightarrow 0 
	\end{equation*}
	which is equivariant with respect to the torus
	action.  We obtain 
	\begin{equation*} 
		\Spec \mathcal{H} \cong \Spec
		\frac{\C[x_{1},\dots,x_{N'}]}{I'}
		\hookrightarrow \Spec
		\C[x_{1},\dots,x_{N'}], 
	\end{equation*}
	and hence an embedding $Y
	\hookrightarrow \C^{N'}$.  The action of
	$T$ on $Y$ is induced by the linear,
	diagonal action of $T$ on $\C^{N'}$ as
	desired.  
\end{proof}
 
Because of this lemma, we are essentially dealing with affine schemes
defined by ideals $I\subset\mathbb{C}[x_1,\ldots,x_N]$ for some $N$, which
are homogeneous for the action of a torus $T\subset GL(N,\mathbb{C})$. We
can even assume that the torus action is diagonal. A choice of an integral
vector
$\xi\in\mt$ then induces a grading on $\mathbb{C}[x_1,\ldots,x_N]$, which
has positive weights when $\xi$ is a Reeb vector. 

We will now relate our algebraic Reeb cone to the one defined differential
geometrically in \cite{MSY} (see also He-Sun~\cite{HS}, and the Sasaki Cone
in \cite{BGS}). 
Suppose that $Y\subset\mathbb{C}^N$ is an affine variety, smooth away from
the origin, and $Y$ is defined by an ideal
$I\subset\mathbb{C}[x_1,\ldots,x_N]$, homogeneous for the diagonal action
of a torus $T$. We will also assume that $Y$ is not contained in a linear
subspace. 

\begin{defn}
A K\"ahler metric $\Omega$ on $Y$ is \emph{compatible}
with a Reeb vector field $\xi\in\mt$ if there exists a $\xi$-invariant
function $r: Y \rightarrow \mathbb{R}_{>0}$ such that $\Omega =
\frac{1}{2}\ddb r^{2}$ and $\xi = J (r\frac{\del}{\del r})$, where $J$
denotes the restriction of the complex structure of $Y$.
\end{defn}

Fixing a Reeb field, and a compatible metric is analogous to fixing an
ample line bundle $L$, and choosing a metric in $c_{1}(L)$.  To see
this, let $Y$ be a polarized affine variety with $\dim_{\C}Y=n+1$, and
let $\xi$ be a rational Reeb vector field. Let $\xi_{\C}$ be the
complexification of $\xi$ and consider the holomorphic action induced by
$\xi_{\C} \in \mt_{\C}$.  Then $Y\backslash\{0\}$ is a principal
$\C^{*}$ orbibundle over the orbifold $X = Y/ \C^{*}$ corresponding to
an ample orbi-line bundle $L \rightarrow X$.  In particular,
$Y\backslash\{0\}$ is the complement of the zero section in the total
space of the orbi-line bundle $L^{-1}$.   By the Kodaira-Bailey
embedding theorem \cite{Ba}, the ampleness of $L$ is equivalent to the
existence of a Hermitian metric $h$ on $L^{-1}$ such that $\omega =\ddb
\log h$ is a metric on $X$.  We define a function $r:Y\rightarrow
\mathbb{R}_{>0}$ by $(z, \sigma) \rightarrow |\sigma|_{h(z)}$, for
$\sigma$ in the fibre of $L^{-1}$  over $z \in X$.  We get a metric on
$Y$ by setting 
\begin{equation}\label{metric}
\Omega = \ddb r^{2}. 
\end{equation}
In particular, when $\xi$ is rational, $(Y,\xi)$ always admits a
compatible K\"ahler metric.

Given a rational Reeb vector $\xi_0$, and compatible metric $\Omega_0$ on $Y$, the contact
1-form $\eta_0$ is defined to be dual to $\xi_0$. The Reeb cone is defined
in \cite{HS} to be
\begin{equation}\label{eq:RCone}
	\mathcal{C}'_R = 
	\{ \xi\in\mt\,|\, \eta_0(\xi) > 0\,\text{ on } Y\setminus\{0\}\}. 
\end{equation}
\begin{prop}
	The cone $\mathcal{C}'_R$ in 
	\eqref{eq:RCone} above coincides with the Reeb cone
	$\mathcal{C}_R$ that we defined in Definition \ref{Reeb field defn}. 
\end{prop}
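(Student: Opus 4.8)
The plan is to express $\eta_0(\xi)$ as a directional derivative of $\log r$ and to exploit the plurisubharmonicity of $\log r$ on the K\"ahler cone. Writing $\eta_0 = d^c\log r$ (the sign convention fixed by $\eta_0(\xi_0) = (r\del_r)(\log r)=1$), one has $\eta_0(\xi) = (-J\xi)(\log r)$. After the linearization of Lemma~\ref{lin act}, if $\alpha_i\in\mt^*$ denotes the weight of the coordinate $x_i$, then $-J\xi = \zeta + \bar\zeta$ where $\zeta = \sum_i \alpha_i(\xi)\,x_i\del_{x_i}$ is holomorphic and $\xi = i(\zeta-\bar\zeta)$; the real-time flow $\Phi_t$ of $-J\xi$ acts by the positive scaling $x_i\mapsto e^{t\alpha_i(\xi)}x_i$ and preserves $Y\setminus\{0\}$. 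Since $Y$ is not contained in a linear subspace each $x_i$ restricts to a nonconstant function on the cone, so $\alpha_i\neq 0$ (a $\xi_0$-invariant holomorphic function would be constant), and since $\xi_0$ is Reeb we get $\alpha_i(\xi_0)>0$ for every $i$.

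First I would reduce to a $T$-invariant metric: averaging the potential $r^2$ over the compact torus $T_\mathbb{R}$ yields a metric compatible with $\xi_0$ for which $r$, hence $\log r$, is $T$-invariant, and this does not change $\mathcal{C}'_R$ (as also follows a posteriori from the identification being proved). The reason for this reduction is the crux of the argument, a convexity statement. Consider the holomorphic map $\phi(\tau) = \exp(\tau\zeta)\cdot p$; its real-time flow is $\Phi_t$ and its imaginary-time flow is the $\xi$-action. Because $\ddb\log r \geq 0$ on the K\"ahler cone — it is the semipositive transverse K\"ahler form $\tfrac12 d\eta_0$, degenerate exactly along the Reeb and radial directions — the function $\log r\circ\phi$ is subharmonic in $\tau$; since $r$ is $T$-invariant it is independent of $\mathrm{Im}\,\tau$, and therefore $t\mapsto h_p(t):=\log r(\Phi_t p)$ is \emph{convex}, with $h_p'(t) = \eta_0(\xi)(\Phi_t p)$.

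With convexity in hand both inclusions follow from the behaviour of $\Phi_t$ as $t\to-\infty$. For $\mathcal{C}_R\subseteq\mathcal{C}'_R$, assume $\alpha_i(\xi)>0$ for all $i$; then for $p\neq 0$ the coordinates $e^{t\alpha_i(\xi)}x_i(p)\to 0$ as $t\to-\infty$, so $\Phi_t p\to 0$ and $h_p(t)\to-\infty$ (as $r$ is continuous and vanishes only at the apex). A convex function with $h_p(-\infty)=-\infty$ has strictly positive derivative everywhere — if $h_p'(t_0)\leq 0$ then $h_p'\leq 0$ on $(-\infty,t_0]$, contradicting $h_p\to-\infty$ — so $\eta_0(\xi)(p)=h_p'(0)>0$. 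For $\mathcal{C}'_R\subseteq\mathcal{C}_R$, assume $\eta_0(\xi)>0$ on $Y\setminus\{0\}$; since $\eta_0(\xi)$ is invariant under the homothetic flow of $\xi_0$ and the link $S=\{r=1\}$ is compact, $\eta_0(\xi)\geq c>0$, whence $h_p(t)\leq h_p(0)+ct\to-\infty$, forcing $\Phi_t p\to 0$ and thus $\alpha_i(\xi)>0$ whenever $x_i(p)\neq 0$; ranging over $p$ gives $\alpha_i(\xi)>0$ for every $i$. Finally any nonzero weight $\alpha$ with $\mathcal{H}_\alpha\neq 0$ is of the form $\alpha=\sum_i m_i\alpha_i$ with $m_i\geq 0$ and $\sum_i m_i\geq 1$ (the $x_i$ generate $\mathcal{H}$, and the weight-$0$ part is just the constants because $\alpha_i(\xi_0)>0$), so $\alpha(\xi)>0$ and $\xi\in\mathcal{C}_R$.

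The main obstacle, and the only genuinely geometric input, is the convexity of $t\mapsto\log r(\Phi_t p)$; everything else is bookkeeping with the weights $\alpha_i$. I would isolate two preliminary facts: that $\log r$ is plurisubharmonic on the K\"ahler cone with $\ddb\log r$ the semipositive transverse form, and that after $T$-averaging the imaginary-time direction of $\phi$ is the $\xi$-flow and hence leaves $\log r$ invariant, collapsing subharmonicity to one-variable convexity. I would also verify the sign in $\eta_0 = d^c\log r$ against $\eta_0(\xi_0)=1$ and confirm that $r$ extends continuously by $0$ at the apex so that $\Phi_t p\to 0$ is detected by $r\to 0$.
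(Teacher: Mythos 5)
Your core argument is correct, and it takes a genuinely different route through the same geometry. Both your proof and the paper's flow along $J\xi$ into the apex and read off the signs of the weights from the linearized action furnished by Lemma~\ref{lin act}; the difference is the mechanism controlling the flow. The paper quotes \cite{MSY} for the fact that $H=\tfrac12 r^2\eta_0(\xi)$ is a Hamiltonian for $\xi$, so that the flow of $J\xi$ is the negative gradient flow of $H$, and then treats the two inclusions by a sign analysis of $H$; in particular the inclusion $\mathcal{C}_R\subseteq\mathcal{C}'_R$ is proved contrapositively, via a rather terse ``suitable point'' gradient-flow argument. You instead use the elementary identity $h_p'(t)=\eta_0(\xi)(\Phi_tp)$ for $h_p(t)=\log r(\Phi_tp)$, together with convexity of $h_p$ (plurisubharmonicity of $\log r$ plus torus invariance of $r$), and the explicit formula $x_i(\Phi_tp)=e^{t\alpha_i(\xi)}x_i(p)$. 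This yields a \emph{direct} proof of $\mathcal{C}_R\subseteq\mathcal{C}'_R$: positive weights force $h_p\to-\infty$ as $t\to-\infty$, and a convex function decaying at $-\infty$ has everywhere positive derivative. That is cleaner than the paper's treatment of the same direction. Your other inclusion (scale invariance of $\eta_0(\xi)$ plus compactness of the link give $h_p'\geq c>0$, hence $\Phi_tp\to0$) is essentially the paper's first case with the gradient-flow convergence replaced by an explicit integration.

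The one genuine weak point is the averaging reduction. Your parenthetical justification --- that invariance of $\mathcal{C}'_R$ under averaging ``follows a posteriori from the identification being proved'' --- is circular, since your argument establishes the identification only for the averaged metric $\tilde\Omega_0$. What can be proved directly is a single inclusion: since $\xi\in\mt$ is fixed by the (abelian) torus and each $g\in T_{\mathbb{R}}$ is holomorphic, one gets
\begin{equation*}
\tilde\eta_0(\xi)=\frac{\int_{T_{\mathbb{R}}}g^*\bigl(r^2\eta_0(\xi)\bigr)\,dg}{\int_{T_{\mathbb{R}}}g^*r^2\,dg},
\end{equation*}
so $\eta_0(\xi)>0$ everywhere implies $\tilde\eta_0(\xi)>0$ everywhere; this salvages $\mathcal{C}'_R\subseteq\mathcal{C}_R$ for the original metric, but \emph{not} the reverse inclusion, and the reverse inclusion genuinely fails for non-invariant compatible metrics. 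For instance on $Y=\C^2$ with $\xi_0$ the diagonal rotation, the potential $r^2=|x|^2+|y|^2+\epsilon\,\mathrm{Re}(x\bar y)$ is $\xi_0$-invariant, strictly plurisubharmonic for $|\epsilon|<1$, and homogeneous of degree two under $-J\xi_0$, hence compatible with $\xi_0$; yet for $\xi=(a,1)$, which lies in $\mathcal{C}_R$ for any $a>0$, one computes $(-J\xi)(r^2)=2a|x|^2+2|y|^2+\epsilon(a+1)\mathrm{Re}(x\bar y)$, which takes negative values as soon as $\epsilon(a+1)>4\sqrt{a}$ (e.g.\ $\epsilon=3/4$, $a=36$), so $\xi\notin\mathcal{C}'_R$ for this metric. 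The resolution is that the proposition --- like the Hamiltonian identity the paper itself invokes, which requires $\mathcal{L}_\xi(r^2\eta_0)=0$ for all $\xi\in\mt$ --- must be read with $\Omega_0$ invariant under the compact torus, as is implicit in \cite{MSY} and \cite{HS}. Under that standing hypothesis your averaging step is unnecessary and the rest of your proof is complete; without it, neither your reduction nor the statement itself survives.
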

\begin{proof}
	We need to relate the condition that $\eta_0(\xi) > 0$ with the
	weights of the circle action generated by $\xi$ on the ring of
	functions. As shown in \cite{MSY}, $H = \frac{1}{2}
	r^2\eta_0(\xi)$ is a Hamiltonian for the vector field $\xi$ with
	respect to $\Omega_0$. It follows that 
	\[ J\xi = -\nabla H, \]
	and moreover $H\to 0$ as we approach the cone point $0$. 
	
	Suppose first that $H$ is
	strictly positive, so $\xi$ cannot vanish anywhere. It follows
	that if we write $\phi_t : Y\to Y$ for the negative gradient flow
	of $H$, then 
	\[ \lim_{t\to\infty} \phi_t(p) = 0,\]
	for any $p\in Y$. Suppose that $f$ is a non-constant
	regular function on $Y$ (for instance a coordinate function on the
	ambient $\mathbb{C}^N$), on
	which $\xi$ acts with weight $\lambda$, and $p$ is a point such
	that $f(p)\not=0$. Then
	$J\xi(f) = -\lambda f$, 
	so 
	\[ \frac{d}{dt} f(\phi_t(p)) = -\lambda f(\phi_t(p)). \]
	Since $f(0)=0$, we must have $\lambda > 0$. So if $\xi\in
	\mathcal{C}'_R$, then $\xi\in\mathcal{C}_R$. 

	Conversely suppose that $H$ is negative somewhere. Since $H$ is
	homogeneous under $r\frac{\partial}{\partial r}$, we can then find
	points arbitrarily close to $0$, where $H$ is negative. For a
	suitable point $p$, the \emph{positive} gradient flow $\phi_t$ of
	$H$ will satisfy $\phi_t(p)\to 0$ as $t\to\infty$. Then the same
	argument as above shows that if $f$ is a non-constant
	homogeneous function for
	$\xi$ which does not vanish at $p$, then the weight of $\xi$ on $f$
	must be negative. 
\end{proof}

\begin{cor}\label{approx cor}
If $\xi$ is an irrational Reeb vector field on $Y$ and $\Omega$ is a
compatible K\"ahler metric, then there exists a
sequence $\xi_{n}\mt$ of rational Reeb vector fields and compatible metrics
$\Omega_k$ on $Y$, such that $\xi_k\to \xi$ in $\mt$, and the $\Omega_k$
converge to $\Omega$ smoothly on compact subsets of $Y$.
\end{cor}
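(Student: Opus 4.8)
The plan is to first approximate $\xi$ by rational Reeb fields, and then to deform the radial function of $\Omega$ so as to produce compatible metrics for these approximating fields. Since the Reeb cone $\mathcal{C}_R$ is cut out of $\mt$ by the strict conditions $\alpha(\cdot)>0$, it is an open convex cone, and the rational directions are dense in it; I may therefore choose rational Reeb fields $\xi_k\in\mathcal{C}_R$ with $\xi_k\to\xi$. The complex structure $J$ on $Y$ is fixed throughout, so the real task is to build, for each $\xi'$ near $\xi$, a $\xi'$-invariant radial function $r_{\xi'}$ with $\xi' = J(r_{\xi'}\del_{r_{\xi'}})$ and $\frac{1}{2}\ddb r_{\xi'}^2>0$, in such a way that $r_\xi = r$ and $r_{\xi'}\to r$ as $\xi'\to\xi$.

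Write $\Omega=\frac{1}{2}\ddb r^2$ and let $\eta$ be the contact form dual to $\xi$, so $\eta(\xi)=1$. As in the proof of the proposition above, $H_{\xi'}:=\frac{1}{2}r^2\eta(\xi')$ is the Hamiltonian of $\xi'$ with respect to $\Omega$, with $-J\xi'=\nabla_\Omega H_{\xi'}$; moreover $H_{\xi'}>0$ on $Y\setminus\{0\}$ exactly when $\xi'\in\mathcal{C}_R$, and this holds for all $\xi'$ near $\xi$ since $\eta(\xi')$ is homogeneous of degree zero and positive on the compact link $\{r=1\}$. The key point is that the level set $\Sigma_{\xi'}:=\{H_{\xi'}=\frac{1}{2}\}$ is invariant under the flow of $\xi'$, because a Hamiltonian is conserved along its own flow. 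I would then define $r_{\xi'}$ by flowing $\Sigma_{\xi'}$ along the fixed-$J$ vector field $-J\xi'=\nabla_\Omega H_{\xi'}$, declaring $\log r_{\xi'}$ to equal the flow time. Since $(-J\xi')(H_{\xi'})=|\nabla_\Omega H_{\xi'}|^2>0$ wherever $\xi'\neq0$, the function $H_{\xi'}$ is strictly increasing along this flow, so the flow identifies $\Sigma_{\xi'}\times\mathbb{R}$ with $Y\setminus\{0\}$ and $r_{\xi'}$ is globally well defined. Because $\xi'$ is real holomorphic it commutes with $-J\xi'$, and as it also preserves $\Sigma_{\xi'}$, the resulting $r_{\xi'}$ is $\xi'$-invariant; by construction $-J\xi'=r_{\xi'}\del_{r_{\xi'}}$, that is $\xi'=J(r_{\xi'}\del_{r_{\xi'}})$.

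It then remains to check positivity and convergence. The form $\Omega_{\xi'}:=\frac{1}{2}\ddb r_{\xi'}^2$ is a genuine cone metric: as $-J\xi'$ generates a holomorphic scaling and $r_{\xi'}^2$ is homogeneous of degree two, one has $\mathcal{L}_{r_{\xi'}\del_{r_{\xi'}}}\Omega_{\xi'}=2\Omega_{\xi'}$, so positivity is scale invariant and it suffices to verify it on one slice. For $\xi'=\xi$ we recover $r_\xi=r$ and $\Omega_\xi=\Omega>0$; since the surface $\Sigma_{\xi'}$ and the gradient flow of $H_{\xi'}$ depend smoothly on $\xi'$ and the link is compact, $r_{\xi'}\to r$ in $C^\infty$ on compact subsets of $Y\setminus\{0\}$, hence in $C^2$ on a fixed compact annulus. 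Positivity being open, $\Omega_{\xi'}>0$ on that annulus for $\xi'$ close to $\xi$, and therefore everywhere by homogeneity. Setting $\Omega_k:=\Omega_{\xi_k}$ gives compatible K\"ahler metrics for the rational Reeb fields $\xi_k$, with $\Omega_k\to\Omega$ smoothly on compact subsets of $Y\setminus\{0\}$, as required.

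The main obstacle I anticipate lies in the construction of the family $r_{\xi'}$ rather than in the density argument. The delicate issue is securing genuine $\xi'$-invariance: flowing from the fixed link $\{r=1\}$ of $\xi$ would fail, since that surface is not $\xi'$-invariant, and this is precisely why I flow instead from the $\xi'$-adapted level set $\Sigma_{\xi'}$. Verifying that $\Sigma_{\xi'}$ is smooth, compact, everywhere transverse to $-J\xi'$, and varies smoothly with $\xi'$ near $\xi$, and that the flow is complete, is the technical heart of the argument. The second, milder obstacle is the preservation of the K\"ahler condition, which I reduce to openness of positivity on a compact slice combined with the homogeneity of cone metrics. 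Both points ultimately rest on the compactness of the link and the smooth dependence of the gradient flow of $H_{\xi'}$ on the parameter $\xi'$.
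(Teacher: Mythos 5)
Your proposal is correct, and its first half---density of rational points in the Reeb cone, which is open and convex because it is cut out by the finitely many weights of a generating set---is exactly the paper's first step. Where you genuinely diverge is the second half: the paper disposes of the construction of the compatible metrics in one sentence by citing Lemma 2.5 of He--Sun \cite{HS}, whereas you prove the needed statement from scratch. Your construction (take the Hamiltonian $H_{\xi'}=\tfrac{1}{2}r^2\eta(\xi')$, flow the level set $\Sigma_{\xi'}=\{H_{\xi'}=\tfrac{1}{2}\}$ along $-J\xi'=\nabla_\Omega H_{\xi'}$, declare $\log r_{\xi'}$ to be the flow time, and deduce positivity of $\tfrac{1}{2}\ddb r_{\xi'}^2$ from openness of positivity on a compact slice together with homogeneity under the holomorphic scaling) is sound, and the technical points you flag do check out: $-J\xi'$ is complete because it generates a one-parameter subgroup of the algebraic torus $T_{\C}$ acting on $Y$; the sublevel sets of $H_{\xi'}$ are compact because $\eta(\xi')$ is pinched between positive constants on the link, so the standard gradient-flow escape argument shows every orbit crosses $\Sigma_{\xi'}$ exactly once (the backward flow converging to the cone point, exactly as in the paper's proof that $\mathcal{C}'_R=\mathcal{C}_R$); and smooth dependence of $r_{\xi'}$ on $(\xi',p)$ follows from the implicit function theorem at the transverse crossing. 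What the paper's route buys is brevity and delegation to an established lemma; what yours buys is a self-contained argument that makes explicit the smooth dependence of $\Omega_{\xi'}$ on $\xi'$, which is precisely what the convergence claim requires and which the paper leaves implicit in the phrase ``the construction of \cite[Lemma 2.5]{HS} shows.'' Two minor caveats: positivity of $\Omega_{\xi_k}$ is only guaranteed for $k$ large (i.e.\ $\xi_k$ close to $\xi$), so one discards finitely many terms of the sequence; and your convergence holds on compact subsets of $Y\setminus\{0\}$, which is the correct reading of the statement, since cone metrics are singular at the vertex.
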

\begin{proof}
	Since $\mathcal{C}_R$ is a rational convex polyhedral cone, we can
	approximate $\xi$ with a sequence of rational elements
	$\xi_k\in\mathcal{C}_R$. The construction of \cite[Lemma 2.5]{HS}
	shows that there are compatible K\"ahler metrics $\Omega_k$, which
	will converge to $\Omega$ on compact subsets of $Y$. 
\end{proof}

\subsection{Orbifold K-stability}\label{sec:orb}

For a review of the basic properties of orbifolds with constant scalar
curvature metrics in mind, see Ross-Thomas \cite{RT1}. Similarly to
them, we will only be
interested in polarized orbifolds, and as explained in \cite{RT1} Remark
2.16., these can be viewed as global $\mathbb{C}^*$-quotients of affine
schemes. More precisely, given a finitely generated graded ring
\[ R=\bigoplus_{k\geqslant 0} R_k\] 
over $\mathbb{C}$, the grading induces a
$\mathbb{C}^*$-action on $\mathrm{Spec}(R)$. When $\mathrm{Spec}(R)$ is
smooth, the corresponding
orbifold is the quotient of $\mathrm{Spec}(R)\setminus\{0\}$ by this
$\mathbb{C}^*$-action. More generally the quotient is a Deligne-Mumford
stack. In our terminology below, the grading
corresponds to a choice of rational Reeb field on the affine scheme
$\mathrm{Spec}(R)$. 

Differential geometrically the affine scheme $Y=\mathrm{Spec}(R)$, if
smooth away from the origin, arises
as the blowdown of the zero section in the total space of $L^{-1}$ for
an orbifold $X$ with orbiample line bundle $L$. In Section~\ref{sec:cal}
below we
will express the Calabi functional on the orbifold $X$ in terms of a
cone metric on $Y$. In the rest of this section we will review the work
of \cite{RT1} which gives a lower bound for the Calabi functional on
the orbifold $X$ in terms of the Futaki invariants of
test-configurations.  

Roughly speaking a 
test-configuration for a polarized orbifold $(X,L)$ is a polarized,
flat, $\mathbb{C}^*$-equivariant family over $\mathbb{C}$, whose generic
fiber is $(X,L^r)$ for some $r > 0$. In greatest generality the family
should be allowed to be a Deligne-Mumford stack. For computations it is useful to
reformulate this more algebraically. Let 
\[ R=\bigoplus_{k\geqslant 0} H^0(X,L^r) \] 
be the homogeneous coordinate ring of $(X,L)$. Any set of
homogeneous generators $f_1,\ldots,f_k$ of $R$ give rise to an embedding
of $X\hookrightarrow\mathbb{P}$ into a weighted projective space. 
Assigning weights to the
$f_1,\ldots, f_k$ induces a
$\mathbb{C}^*$-action on the weighted projective space $\mathbb{P}$.
Acting on $X\hookrightarrow\mathbb{P}$ we obtain a family
$X_t\subset\mathbb{P}$ for $t\not=0$. Taking the flat completion of this
family across $t=0$ is a test-configuration $\chi$. The central fiber of this
test-configuration is a polarized Deligne-Mumford stack $(X_0, L_0)$,
with a $\mathbb{C}^*$-action. It is convenient to allow $L_0$ to be a
$\mathbb{Q}$-line bundle, so that on the generic fiber we recover $L$
instead of a power of $L$. Let us write $d_k = \dim H^0(L_0^k)$ and
let $w_k$ be the total weight of the $\mathbb{C}^*$-action on
$H^0(L_0^k)$. As explained in \cite{RT1}, the
Riemann-Roch theorem from To\"en~\cite{Toe} implies that for large $k$ we
have expansions
\begin{equation}\label{eq:expand} \begin{aligned}
	d_k &= a_0 k^n + (a_1 + \rho_1(k))
	k^{n-1} + \ldots, \\
	w_k &= b_0 k^{n+1} + (b_1 +
	\rho_2(k))k^n+ \ldots,
\end{aligned}
\end{equation}
where $\rho_1,\rho_2$ are periodic functions with average zero. The
Futaki invariant of the test-configuration is then defined to be
\[ Fut(\chi) = \frac{a_1}{a_0}b_0 - b_1. \]
Writing $A_k$ for the infinitesimal generator of the
$\mathbb{C}^*$-action on $H^0(L_0^k)$, there is also an expansion
\[ \mathrm{Tr}(A_k^2) = c_0 k^{n+2} + O(k^{n+1}), \]
and the norm of the test-configuration is defined by
\[ \Vert\chi\Vert^2 = c_0 - \frac{b_0^2}{a_0}. \]

The main result that we need is the extension by Ross-Thomas~\cite{RT1}
of Donaldson's lower bound for
the Calabi functional~\cite{D}, to orbifolds. 
\begin{thm}[Donaldson, Ross-Thomas] 
	Suppose that $(X,L)$ is a polarized orbifold of dimension $n$, 
	and let $\omega\in
	c_1(L)$ be an orbifold metric. In addition suppose that $\chi$
	is a test-configuration for $(X,L)$. Then 
	\[ \Vert\chi\Vert\cdot \Vert R_\omega - \hat{R}\Vert_{L^2(\omega)}
	\geqslant -c(n)Fut(\chi), \]
	where $R_\omega$ is the scalar curvature of $\omega$, and
	$\hat{R}$ is its average. 
\end{thm}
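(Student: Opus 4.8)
The statement is Donaldson's lower bound for the Calabi functional in the orbifold setting, so the plan is to follow Donaldson's quantization argument, realizing both sides of the inequality as limits of algebraic quantities attached to the weighted projective embeddings $\iota_k\colon X\hookrightarrow\mathbb{P}$ defined by a basis of $H^0(X,L^k)$. Fix a Hermitian metric $h$ on $L$ with curvature form $\omega$, and let $s_0,\dots,s_{N_k}$ be a basis of $H^0(X,L^k)$ that is orthonormal for the $L^2$-product induced by $h^k$ and $\omega^n/n!$. The test configuration $\chi$ is recorded, at each level $k$ (a multiple of $r$), by the infinitesimal generator $A_k$ of its $\C^*$-action on $H^0(X,L^k)$: a Hermitian endomorphism with $\Tr A_k=w_k$ and $\dim H^0(X,L^k)=d_k$ governed by the expansions \eqref{eq:expand}. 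The bound will then drop out of the Cauchy--Schwarz inequality applied to the pairing of $R_\omega-\hat R$ with the Hamiltonian of $A_k$.

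First I would record the orbifold density--of--states expansion for $\rho_k:=\sum_i|s_i|^2_{h^k}$, namely $\rho_k=k^n+\tfrac12 R_\omega\,k^{n-1}+\dots$ on the smooth part of $X$; this is the mechanism converting the differential--geometric quantity $R_\omega$ into the coefficients in \eqref{eq:expand}. Second, to the generator I attach the Hamiltonian $H_{A_k}=\rho_k^{-1}\sum_{ij}(A_k)_{ij}s_i\bar s_j$, the pullback under $\iota_k$ of the Fubini--Study moment map, and I normalize to $\bar A_k=A_k-\tfrac{w_k}{d_k}\,\mathrm{Id}$, so that $H_{\bar A_k}=H_{A_k}-\tfrac{w_k}{d_k}$ and $\int_X H_{\bar A_k}\rho_k\,\omega^n/n!=\Tr\bar A_k=0$. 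Combining this exact moment--map identity with the expansion \eqref{eq:expand} of $w_k,d_k$ and the expansion of $\rho_k$, one solves order by order in $k$ to obtain
\[ \int_X (R_\omega-\hat R)\,H_{\bar A_k}\,\frac{\omega^n}{n!}=-c_1(n)\,Fut(\chi)\,k+o(k), \]
where the combination $\tfrac{a_1}{a_0}b_0-b_1$ appears exactly as in the definition of $Fut(\chi)$. Third, the near--diagonal behaviour of the Bergman kernel gives $\Vert H_{\bar A_k}\Vert_{L^2(\omega)}^2=c_2(n)\,k^{-n}\Tr(\bar A_k^2)+o(k^2)$, and since $\Tr(\bar A_k^2)=\Tr(A_k^2)-w_k^2/d_k=(c_0-b_0^2/a_0)k^{n+2}+o(k^{n+2})=\Vert\chi\Vert^2 k^{n+2}+o(k^{n+2})$, we get $\Vert H_{\bar A_k}\Vert_{L^2(\omega)}=\sqrt{c_2(n)}\,\Vert\chi\Vert\,k+o(k)$.

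Applying Cauchy--Schwarz to the pairing above,
\[ c_1(n)\,|Fut(\chi)|\,k+o(k)\leqslant \Vert R_\omega-\hat R\Vert_{L^2(\omega)}\cdot\sqrt{c_2(n)}\,\Vert\chi\Vert\,k+o(k), \]
and dividing by $k$ before letting $k\to\infty$ yields $\Vert\chi\Vert\cdot\Vert R_\omega-\hat R\Vert_{L^2(\omega)}\geqslant \tfrac{c_1(n)}{\sqrt{c_2(n)}}|Fut(\chi)|\geqslant -c(n)Fut(\chi)$, with $c(n)=c_1(n)/\sqrt{c_2(n)}$. The case $Fut(\chi)\leqslant 0$ is the substantive one, and for $Fut(\chi)>0$ the inequality holds trivially.

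The main obstacle is the orbifold analysis underlying the two expansions. The pointwise density $\rho_k$ still admits a clean asymptotic expansion in powers of $k$ on the smooth part of $X$, so the analytic quantities above — the pairing and the $L^2$-norm — converge without oscillation. The periodic terms $\rho_1,\rho_2$ in \eqref{eq:expand} instead arise from the contributions of the orbifold strata to the \emph{integrated} quantities $d_k$ and $w_k$ through Toën's orbifold Riemann--Roch. The crucial point, and precisely the reason $Fut(\chi)$ is defined using only the average coefficients $a_1,b_1$, is that the analytic side matches the averaged Riemann--Roch data, so the periodic corrections localized at the orbifold points are invisible to the Calabi functional. Making this rigorous — establishing the orbifold Bergman expansion with remainder estimates uniform up to the singular locus, and identifying its integral with the non--periodic part of Toën's formula — is the technical heart of the argument; the moment--map and Cauchy--Schwarz manipulations are then formally identical to Donaldson's.
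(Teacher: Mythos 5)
Your skeleton (Bergman-kernel quantization, moment-map pairing, Cauchy--Schwarz, then $k\to\infty$) is indeed the skeleton of the Donaldson/Ross--Thomas argument, but note first that the paper itself does not reprove this theorem: its ``proof'' consists of citing Theorems 6.6 and 6.8 of \cite{RT1} and tracking the constants there, namely $C=\frac{1}{2}\left(\mathrm{vol}\sum_i c_i\right)^{1/2}$ and $c=a_0\sum_i c_i k^{n+1}$ to highest order. The sums $\sum_i c_i$ appearing in those constants point to exactly where your proposal has a genuine gap: Ross--Thomas do not work with a single power $L^k$, but with weighted sums of sections of several consecutive powers $\bigoplus_i H^0(L^{k+i})$, with the weights $c_i$ chosen precisely so that the periodic-in-$k$ contributions coming from the orbifold locus cancel to the required order.

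The gap, concretely: for an orbifold with cyclic quotient singularities (in particular along divisors, which is the relevant case for Sasakian quotients), the single-power density of states $\rho_k$ does \emph{not} admit an expansion $k^n+\tfrac{1}{2}R_\omega k^{n-1}+O(k^{n-2})$ with remainder uniform up to the singular locus. There are additional terms of size $O(k^n)$ concentrated in $k^{-1/2}$-neighborhoods of the singular set which oscillate with $k$; these are exactly what produce the periodic terms $\rho_1(k)k^{n-1}$ and $\rho_2(k)k^n$ in $d_k$ and $w_k$. Paired against the Hamiltonian $H_{\bar A_k}=O(k)$, they contribute at order $k$ --- the very order at which $Fut(\chi)$ enters your pairing identity. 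So your claimed expansion $\int_X(R_\omega-\hat R)H_{\bar A_k}\,\omega^n/n!=-c_1(n)Fut(\chi)\,k+o(k)$ does not follow; with a single power one gets an extra oscillating term of order $k$ with average zero but no limit, and ``divide by $k$ and let $k\to\infty$'' breaks down. (Relatedly, sections of a single power $L^k$ in general do not even embed an orbifold --- near a $\mathbb{Z}_m$-point they only see the coarse space --- which is why both \cite{RT1} and this paper define test configurations via weighted projective embeddings using homogeneous generators of several different degrees.) This is not the removable technicality you describe: the clean uniform expansion you posit is false, and the cancellation of oscillations between the analytic and algebraic sides is exactly what Ross--Thomas's weighted Bergman kernel $\sum_i c_i\rho_{k+i}$ is engineered to achieve. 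Your $L^2$-norm step does survive a single-power treatment (the singular-locus corrections to $\Vert H_{\bar A_k}\Vert_{L^2}^2$ are $O(k)$, hence $o(k^2)$), but the pairing step --- the heart of the inequality --- requires the multi-power machinery.
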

Although this result is not stated explicitly in \cite{RT1}, it follows
easily from their proofs. In particular in their Theorem 6.6 the
constant $C$ can be taken to be $\frac{1}{2}\left(\mathrm{vol}\sum_i
c_i\right)^{1/2}$, while in the proof of Theorem 6.8 the constant $c$
equals $a_0\sum_{i} c_ik^{n+1}$ to highest order. Combining these, the
last inequality in the proof of Theorem 6.8 gives the result we need.

\section{The Calabi Functional on a Polarized Affine
Variety}\label{sec:cal}
Let us suppose as in Section~\ref{sec:background} that $Y\setminus\{0\}$ is
the complement of the zero section in the total space of an orbi-line
bundle $L^{-1}$ over $X$, and $h$ is a Hermitian metric on $L^{-1}$ such that
$\omega = \ddb\log h$ is positive on $X$. Letting $r$ be the fiberwise
norm, define the metric 
\[ \Omega = \ddb r^2 \]
on $Y\setminus\{0\}$. We will compute the Calabi functional of $\omega$ in
terms of the metric $\Omega$ on $Y$. 

Fix local coordinates $(z,w)$ where
$z\in X$ and $w$ is a local holomorphic section of $L^{-1}$ in a
neighborhood of $p = (z_{0},w_{0})$, and assume that $dh =0$ at $p$.  At
$p$ we compute
\begin{equation*}
\Omega = \ddb h(z)|w|^{2} = r^{2}\ddb \log h + h(z) \ddb |w|^{2} =
r^{2}\left(\pi^{*}\omega + \frac{i dw \wedge
d\overline{w}}{|w|^{2}}\right).
\end{equation*}
Here $\pi :Y \rightarrow X$ is the natural projection map.  It follows
that the Ricci form and scalar curvature of $\Omega$ are given by
\begin{equation*}
Ric({\Omega}) = (n+1)(\pi^{*}Ric(\omega)-\pi^{*}\omega),\qquad
R_{\Omega} =  r^{-2}(\pi^{*}R_{\omega}-n).
\end{equation*}
On a fixed fibre, the cylinder metric $|w|^{-2}(dw \wedge
d\overline{w})$ can also be written as $\frac{1}{r}dr \wedge d\theta$,
where $d\theta$ is given by the $U(1)$ action on the fibres of $L^{-1}$.
Hence, the volume form of $\Omega$ is
\begin{equation*}
\Omega^{n+1} = r^{2n+1}(\pi^{*}\omega)^{n} \wedge dr\wedge d\theta.
\end{equation*}
Let $\{U_{i}, \Gamma_{i}\}, i=1,\dots, n$ be a family of open sets
$U_{i} \subset \C^{n}$ together with local uniformizing groups
$\Gamma_{i}$, so that $U_{i}/\Gamma_{i}\cong V_{i} \subset X$ gives an
open cover of $X$, and so that $L^{-1}$ is trivial on each $V_{i}$.  Let
$\phi_{i}$ be a partition of unity subordinate to the cover $V_{i}$.
Note that the set $S:=\{r=1\} \subset Y$ is a smooth submanifold of $Y$,
which is the total space of a principal $U(1)$ orbibundle over $X$.
Thus, by Lemma 4.2.8 of \cite{BG}, we have that the local uniformizing
groups inject into $U(1)$.  In particular, the maps
\begin{equation*}
U(1) \times U_{i} \xrightarrow{\psi_{i}} V_{i}
\end{equation*}
are exactly  $|\Gamma_{i}|$-to-one on the complement of the orbifold
locus.  Let $\hat{R}_{\omega}$ denote that average scalar curvature of
$X$.  We compute
\begin{equation*}
\begin{aligned}
Cal_{X}(\omega)^{2}:=2\pi \int_{X} (R_{\omega} - \hat{R}_{\omega})^{2}
\omega^{n} &= \sum_{i} \frac{2\pi}{|\Gamma_{i}|} \int_{U_{i}} \phi_{i}
(R_{\omega} - \hat{R}_{\omega})^{2} \omega^{n}\\ &=
\sum_{i}\frac{1}{|\Gamma_{i}|}  \int_{U(1) \times U_{i}}
\pi^{*}\phi_{i}(\pi^{*}R_{\omega} - \hat{R}_{\omega})^{2}
\pi^{*}\omega^{n}\wedge d\theta \\&= \sum_{i=1}^{N} \int_{V_{i}}
\pi^{*}\phi(\pi^{*}R_{\omega} - \hat{R}_{\omega})^{2}
\pi^{*}\omega^{n}\wedge d\theta\\&= \int_{S}( \pi^{*}R_{\omega} -
\hat{R}_{\omega})^{2} \iota_{\frac{\del}{\del r}}(\Omega^{n+1}).
\end{aligned}
\end{equation*}
Let us write $\hat{R}_{\Omega}$ for the average of $R_{\Omega}$ when
restricted to $S$.  Then we have the relation
\begin{equation*}
\hat{R}_{\Omega} = \hat{R}_{\omega} - n.
\end{equation*}
Finally, we can compute
\begin{equation*}
\begin{aligned}
\int_{\{r\leq 1\}\subset Y} (r^{2}R_{\Omega} - \hat{R}_{\Omega})
\Omega^{n+1} &= \int_{0}^{1} \int_{S} ( \pi^{*}R_{\omega} -
\hat{R}_{\omega})^{2}\iota_{\frac{\del}{\del r}}(\Omega^{n+1})
r^{2n+1}dr \\ 
&= \frac{1}{2n+2}Cal_{X}(\omega)^{2}.
\end{aligned}
\end{equation*}
\begin{defn}
Let $Y$ be an affine variety with isolated singular point at $0$, and
Reeb field $\xi$ and let $\Omega$ be a K\"ahler metric on $Y$ compatible
with $\xi$, with scalar curvature $R_{\Omega}$.  Define
\begin{equation*}
Cal_{Y}(\Omega) := \left(\int_{\{r\leq 1\}} (r^{2}R_{\Omega} -
\hat{R}_{\Omega})^{2} \Omega^{n+1}\right)^{1/2}
\end{equation*}
where,
\begin{equation*}
\hat{R}_{\Omega} :=\frac{\int_{S}R_{\Omega}\iota_{\frac{\del}{\del
r}}(\Omega^{n+1})}{\int_{S}\iota_{\frac{\del}{\del r}}(\Omega^{n+1})}.
\end{equation*}
\end{defn}
In order to relate this to the Sasakian setting, let $(S,g)$ be a
Sasakian
manifold.  Observe that when $\xi$ is rational, 
\begin{equation}\label{scalar curvature relations}
\pi^{*}R_{\omega} = \frac{1}{4}R^{T} = \frac{1}{4}\left(R+2n\right),  
\end{equation}
where $R^{T}$ is the transverse scalar curvature of the Reeb foliation
and $R$ is the scalar curvature of the Sasakian metric $g$.  In this case,
we have
\begin{equation*}
\begin{aligned}
 \int_{S} ( \pi^{*}R_{\omega} -
 \hat{R}_{\omega})^{2}\iota_{\frac{\del}{\del r}}(\Omega^{n+1}) &=
 \frac{1}{16}\int_{S} (R^{T}-\hat{R}^{T})^{2}\iota_{\frac{\del}{\del
 r}}(\Omega^{n+1})\\ &= \frac{1}{16}\int_{S}(R - \hat{R})^{2}d\mu.
 \end{aligned}
 \end{equation*}
Here $\hat{R}$ is the average scalar curvature of $(S,g)$.  We note that
this agrees, up to a constant, with the functional studied in
\cite{BGS}.  We have shown that for rational Reeb fields we have the
equality
\begin{equation*}
Cal_{Y}(\Omega) = \frac{1}{4(2n+2)^{1/2}}Cal_{S}(g).
\end{equation*}
Since both sides of this equality depend continuously on the Reeb vector
field, this also holds for irrational Reeb fields by approximation.  We
record this in the following proposition.
\begin{prop}
Let $Y$ be an affine variety polarized by a rational Reeb field $\xi$.
Then,
\begin{equation*}
Cal_{Y}(\Omega) = \frac{1}{(2n+2)^{1/2}}Cal_{X}(\omega).
\end{equation*}
Moreover, when $Y$ is the cone over a Sasakian manifold $(S,g)$, then
\begin{equation*}
Cal_{Y}(\Omega) = \frac{1}{4(2n+2)^{1/2}}Cal_{S}(g),
\end{equation*}
and this holds for any Reeb vector field.
\end{prop}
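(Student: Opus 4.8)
The plan is to read off both identities from the curvature computations already performed in this section, and then to use a continuity argument to extend the Sasakian identity to irrational Reeb fields. The first identity is immediate from the displayed computation preceding the definition of $Cal_Y(\Omega)$: performing the radial integral (the factor $\int_0^1 r^{2n+1}\,dr = \frac{1}{2n+2}$) and reducing the remaining integral over $S$ to an integral over $X$ --- using that the local uniformizing groups inject into $U(1)$, by Lemma 4.2.8 of \cite{BG} --- yields
\[ Cal_Y(\Omega)^2 = \int_{\{r\leq 1\}}(r^2 R_\Omega - \hat{R}_\Omega)^2\,\Omega^{n+1} = \frac{1}{2n+2}\,Cal_X(\omega)^2. \]
Taking square roots gives $Cal_Y(\Omega) = (2n+2)^{-1/2}\,Cal_X(\omega)$, which is the first assertion.

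For the Sasakian identity with $\xi$ rational, I would substitute the relation $\pi^*R_\omega = \frac14 R^T = \frac14(R+2n)$ from \eqref{scalar curvature relations} into the integral over $S$. Since this relation is affine in the scalar curvature, the deviation $\pi^*R_\omega - \hat R_\omega$ becomes $\frac14(R-\hat R)$, and one obtains
\[ \int_S(\pi^*R_\omega - \hat R_\omega)^2\,\iota_{\frac{\del}{\del r}}(\Omega^{n+1}) = \frac{1}{16}\int_S(R-\hat R)^2\,d\mu = \frac{1}{16}\,Cal_S(g)^2, \]
so that $Cal_X(\omega)^2 = \frac{1}{16}Cal_S(g)^2$. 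Combining this with the first identity gives $Cal_Y(\Omega) = \frac{1}{4(2n+2)^{1/2}}Cal_S(g)$ whenever $\xi$ is rational.

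To remove the rationality hypothesis in the Sasakian case, I would appeal to Corollary \ref{approx cor}: an irrational Reeb field $\xi$ with compatible metric $\Omega$ is the limit of rational Reeb fields $\xi_k$ with compatible metrics $\Omega_k$ converging to $\Omega$ smoothly on compact subsets of $Y$. The identity holds for each $\xi_k$, so it suffices to check that both $Cal_Y(\Omega_k)\to Cal_Y(\Omega)$ and $Cal_S(g_k)\to Cal_S(g)$. The former follows because the integrand $(r^2 R_\Omega - \hat R_\Omega)^2\Omega^{n+1}$, the normalizing constant $\hat R_\Omega$, and the region $\{r\leq 1\}$ all depend continuously on $\xi$; the latter follows from the corresponding continuous dependence of the Sasakian metric $g$ on $\xi$. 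The one point requiring care is the convergence of the integrals near the cone point, where smooth convergence on compact subsets of $Y$ is not by itself sufficient. This is controlled by the uniform estimate $\alpha(\xi)\geq\epsilon|\alpha|$ valid on the Reeb cone (Definition \ref{Reeb field defn}), which keeps the metrics $\Omega_k$ uniformly comparable near $0$ and makes the contribution of a small neighborhood of the origin uniformly small; establishing this uniform integrability is the main technical obstacle in the argument.
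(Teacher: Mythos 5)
Your proposal follows the paper's own proof essentially verbatim: the first identity comes from the radial integration $\int_0^1 r^{2n+1}\,dr = \frac{1}{2n+2}$ together with unwinding the orbifold integral over $X$ to an integral over $S$ via the uniformizing groups, the Sasakian identity follows from substituting $\pi^*R_\omega = \frac{1}{4}(R+2n)$, and the irrational case is handled by approximation through Corollary~\ref{approx cor}, exactly as the paper does. The only divergence is at the very end, where the paper dismisses the limit in one sentence ("both sides depend continuously on the Reeb vector field"); the cone-point integrability you flag as the main technical obstacle is in fact harmless, not because of the weight estimate $\alpha(\xi)\geq\epsilon|\alpha|$, but because $r^2R_\Omega - \hat{R}_\Omega$ is homogeneous of degree zero under the radial flow (it is pulled back from the link), so it is bounded by its supremum on $\{r=1\}$ and the contribution of $\{r\leq\delta\}$ is $O(\delta^{2n+2})$ uniformly along the approximating sequence.
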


Before proceeding, we make a few brief remarks about the scaling of the Calabi functional as a function of the Reeb field.  More precisely, suppose that $Y$ is an affine cone with Reeb vector field $\xi$, and a compatible K\"ahler metric $\Omega = i\del\dbar r^{2}$.  Scaling the Reeb vector field by a factor $\lambda >0$, corresponds to changing $r$  by $r \mapsto r^{\lambda}$.  This scaling yields a new metric $\Omega_{\lambda} = i\del\dbar r^{2\lambda}$.  It is straight forward to check that under a deformation of this type we have
\begin{equation}\label{Cal scale}
Cal_{Y}(\Omega_{\lambda}) = \lambda^{\frac{n-1}{2}}Cal_{Y}(\Omega).
\end{equation}

\section{The Index Character and the Donaldson-Futaki Invariant}
The main difficulty in extending the definition of K-stability to
irregular Sasakian manifolds, is 
the absence of a suitable Riemann-Roch formula when the Reeb
field is irrational.  When the Reeb field is rational, Ross-Thomas
showed in \cite{RT} that the relevant coefficients are the non-periodic
terms of the orbifold Riemann-Roch expansion (see Section~\ref{sec:orb}).  
When the Reeb vector
field is irrational, the Riemann-Roch expansion does not seem to exist,
so we would like to define the relevant coefficients by approximating an
irrational Reeb vector field $\xi$ by a sequence of rational ones
$\xi_k$. The periodic terms in the expansions \eqref{eq:expand}
corresponding to the $\xi_k$ become
unmanageable as $k\to\infty$, so we need a different approach. 
 The key observation is that
the Riemann-Roch coefficients are determined by the Hilbert series, or
equivalently the index character introduced by
Martelli-Sparks-Yau~\cite{MSY}. For
the leading term (the volume), this was also used by \cite{MSY}. 

In this section and the next we take $Y \subset \mathbb{C}^{N}$ to be an
affine scheme of dimension $n+1$, defined by the ideal $I = (f_{1},
\dots, f_{r}) \subset R = \mathbb{C}[x_{1}, \dots, x_{N}]$.   Let $T
\subset GL(N,\C)$ be a torus of dimension $s$ acting diagonally,
holomorphically and effectively on $Y$.  We make this assumption without
loss of generality by Lemma~\ref{lin act}.    Denote by $\mathfrak{t}$
the Lie algebra of $T$ and let $\mathcal{H} = R/I$ be the ring of
regular functions on $Y$.  Since $T$ fixes $Y$, the ideal $I$ is
homogeneous for the torus action. By Corollary~\ref{approx cor} 
we may always assume that $T$ contains at least one rational Reeb
vector field.  Let
\begin{equation*}
\mathcal{H} = \oplus_{\alpha \in \mathfrak{t}^{*}}  \mathcal{H}_{\alpha}
\end{equation*}
be the weight space decomposition of $\mathcal{H}$.

\begin{defn}
In the above situation, we define the $T$-equivariant index character
$F(\xi,t)$ for $\xi\in\mathcal{C}_{R}$ and $t\in \mathbb{C}$ with
$\mathrm{Re}(t) > 0$, by
\begin{equation}\label{indchar}
F(\xi,t) := \sum_{\alpha \in \mt^{*}} e^{-t\alpha(\xi)}\dim\mathcal{H}_{\alpha}.
\end{equation}
\end{defn}

\begin{lem}\label{ind char convg}
The defining sum for $F(\xi, t)$ converges if $\xi$ is a Reeb vector
field and $\mathrm{Re}(t) > 0$.  
\end{lem}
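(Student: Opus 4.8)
The plan is to show that the Dirichlet-type series defining $F(\xi,t)$ converges absolutely whenever $\mathrm{Re}(t)>0$ and $\xi$ is a Reeb field. The natural strategy is to compare the sum against a convergent integral (or a geometric-type bound) by using the fact, recorded in Definition~\ref{Reeb field defn}, that a Reeb field $\xi$ satisfies $\alpha(\xi)\geqslant\epsilon|\alpha|$ for some $\epsilon>0$ on all non-empty weight spaces. This positivity is what separates the weights away from the origin and forces the exponential factors $e^{-t\alpha(\xi)}$ to decay. The only thing that could obstruct convergence is the growth of $\dim\mathcal{H}_\alpha$, so the heart of the argument is a polynomial bound on these dimensions.

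First I would reduce to the ambient polynomial ring. Since $\mathcal{H}=R/I$ is a quotient of $R=\mathbb{C}[x_1,\ldots,x_N]$ as a $T$-module, we have $\dim\mathcal{H}_\alpha\leqslant\dim R_\alpha$ for every weight $\alpha$, so it suffices to prove convergence of $\sum_\alpha e^{-t\alpha(\xi)}\dim R_\alpha$. By Lemma~\ref{lin act} we may assume the $T$-action is diagonal, with $x_i$ carrying weight $\beta_i\in\mt^*$; then $R_\alpha$ is spanned by the monomials $x^m=x_1^{m_1}\cdots x_N^{m_N}$ with $\sum_i m_i\beta_i=\alpha$. Because $\xi$ is a Reeb field, $\beta_i(\xi)=:\lambda_i>0$ for each $i$ (the coordinate functions are non-constant), and a monomial of weight $\alpha$ then has $\alpha(\xi)=\sum_i m_i\lambda_i$. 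Grouping the sum over monomials rather than over weights can only increase it, so it is enough to bound
\begin{equation*}
\sum_{m\in\mathbb{N}^N} \bigl|e^{-t\alpha(\xi)}\bigr|
= \sum_{m\in\mathbb{N}^N} e^{-\mathrm{Re}(t)\sum_i m_i\lambda_i}
= \prod_{i=1}^{N}\sum_{m_i=0}^\infty e^{-\mathrm{Re}(t)\lambda_i m_i}.
\end{equation*}

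The product on the right is a product of $N$ convergent geometric series, each equal to $\bigl(1-e^{-\mathrm{Re}(t)\lambda_i}\bigr)^{-1}$, which is finite precisely because $\mathrm{Re}(t)>0$ and $\lambda_i>0$. This gives the required absolute convergence: the original sum $F(\xi,t)$ is dominated term-by-term (after regrouping by monomials) by this finite product. The main technical point to get right is the regrouping step, namely that summing the absolute values $e^{-\mathrm{Re}(t)\alpha(\xi)}\dim R_\alpha$ over weights $\alpha$ coincides with summing $e^{-\mathrm{Re}(t)\sum_i m_i\lambda_i}$ over all monomials $m$, so that Tonelli/Fubini for non-negative terms legitimizes factoring the sum into the product above.

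The step I expect to be the genuine (if modest) obstacle is extracting the strict positivity $\lambda_i>0$ of the weights of the generators in a way that is uniform and correct even when $Y$ is non-reduced or lies in a proper linear subspace. One must ensure that every coordinate function $x_i$ that survives in $\mathcal{H}$ is genuinely non-constant so that the Reeb condition $\alpha(\xi)>0$ applies; any $x_i$ mapping to a constant in $\mathcal{H}$ contributes weight $0$ and would spoil the geometric series. This is handled by passing to a generating set of homogeneous \emph{non-constant} functions as in the proof of Lemma~\ref{lin act}, or equivalently by noting that the weight-zero part $\mathcal{H}_0$ consists only of the constants and is finite-dimensional, so its contribution to $F(\xi,t)$ is a harmless finite term that can be split off before applying the geometric-series bound to the remaining strictly positive weights.
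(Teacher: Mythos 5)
Your proof is correct, and it takes a genuinely different route from the paper's after the common first step. Both arguments begin by bounding $\dim\mathcal{H}_\alpha$ by the corresponding weight-space dimension of the ambient $\mathbb{C}^N$. From there the paper argues softly: since the coordinates carry positive $\xi$-weights, $\dim\mathcal{H}_\alpha < C|\alpha|^N$, and since $\xi$ lies in the Reeb cone there is $c>0$ with $\alpha(\xi) > c|\alpha|$ on all non-empty weight spaces, so the series is dominated by $\sum_\alpha e^{-c|\alpha|\mathrm{Re}(t)}|\alpha|^N$, where polynomial growth of dimensions loses to exponential decay. You instead regroup the dominating sum over monomials and factor it, via Tonelli for non-negative terms, into the product $\prod_{i=1}^N \bigl(1-e^{-\mathrm{Re}(t)\lambda_i}\bigr)^{-1}$ of geometric series. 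Your version needs only the positivity of the individual coordinate weights $\lambda_i = \beta_i(\xi)$, not the uniform cone estimate $\alpha(\xi)\geqslant \epsilon|\alpha|$, and it produces an explicit closed-form majorant --- the index character of the ambient $\mathbb{C}^N$ evaluated at $\mathrm{Re}(t)$ --- which anticipates the product formula for $F(\xi,t)$ that the paper only derives later, in Theorem~\ref{ind char exp} via Proposition~\ref{HS shape prop}. The subtlety you flag about weight-zero coordinates is equally implicit in the paper's proof (its phrase ``as $\xi$ acts by positive weights'' silently assumes the embedding of Lemma~\ref{lin act} with non-constant homogeneous generators), so you are not missing anything the paper handles; just note that of your two proposed fixes, the first is the operative one --- choosing non-constant homogeneous generators so that every $\lambda_i>0$ --- whereas merely splitting off a finite-dimensional $\mathcal{H}_0$ would not by itself repair the product bound if some generator genuinely had weight zero, since such a generator would force infinite-dimensional weight spaces throughout.
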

\begin{proof}
The dimensions $\dim\mathcal{H}_\alpha$ are bounded by the corresponding
dimensions for $\C^N$.  As $\xi$ acts by positive weights,
$\dim\mathcal{H}_\alpha < C|\alpha|^N$.  Moreover, since $\xi$ is a Reeb
vector field, there is a $c > 0$ such that
$\alpha(\xi) >c|\alpha|$ for all $\alpha$ with non-zero
$\mathcal{H}_\alpha$.  We obtain
\begin{equation*}
	\sum_{\alpha \in \mathfrak{t}^{*}}\left|
	e^{-t\alpha(\xi)}\right|\dim\mathcal{H}_{\alpha} \leq
	C\sum_{\alpha\in\mathfrak{t}^*} e^{-c|\alpha|\mathrm{Re}(t)}|\alpha|^N,
\end{equation*}
which converges if $\mathrm{Re}(t)>0$.
\end{proof}

Suppose that $\xi$ is rational, and it is minimal satisfying the
condition that $\alpha(\xi)$ is integral for each $\alpha$ with
non-zero weight space.
Then as before we can think of $Y$ as the total space of a line bundle
$L$ over the orbifold $X=Y/\C^*$, and 
\begin{equation*}
H^0(X,L^k) = \bigoplus_{\alpha;\,\alpha(\xi)=k} \mathcal{H}_\alpha.
\end{equation*}
By the orbifold Riemann-Roch theorem \cite{Kaw, Toe}, we have
\begin{equation*}
\dim H^0(X,L^k) = a_0k^n + (a_1 + \rho)k^{n-1} + \cdots
\end{equation*}
for some periodic function $\rho$ with average zero. In this case we
have the following.
\begin{prop}\label{ind char RR}
The $T$-equivariant index character $F(\xi,t)$ as a function of $t$ has
a meromorphic extension to a neighborhood of the origin, and it has
Laurent expansion 
\begin{equation*}
F(\xi,t) = \frac{a_{0}n!}{t^{n+1}} + \frac{a_{1}(n-1)!}{t^{n}}+ O(t^{1-n}),
\end{equation*}
near $t=0$. 
\end{prop}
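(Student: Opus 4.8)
The plan is to use the integrality of the weights under $\xi$ to rewrite the index character as an ordinary one-variable Hilbert series, and then to read off its Laurent expansion at $t=0$ directly from the Riemann--Roch data. First I would group the weight spaces according to the integer value $k=\alpha(\xi)\geq 0$. Setting $d_k=\dim H^0(X,L^k)=\sum_{\alpha(\xi)=k}\dim\mathcal{H}_\alpha$, the defining sum \eqref{indchar} collapses to
\[ F(\xi,t)=\sum_{k\geq 0} d_k\, e^{-tk}, \]
which converges for $\mathrm{Re}(t)>0$ by Lemma~\ref{ind char convg}. The orbifold Riemann--Roch expansion for $d_k$ holds for all $k$ beyond some $k_0$; since altering finitely many $d_k$ changes $F$ only by an entire function of $t$, which is $O(1)$ near the origin, I may substitute the Riemann--Roch value $a_0k^n+(a_1+\rho)k^{n-1}+\cdots$ for $d_k$ at the cost of such a harmless error.

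The main computation is then to sum the resulting monomials against $e^{-tk}$. Introducing
\[ G_m(t):=\sum_{k\geq 0} k^m e^{-tk}=\left(-\frac{d}{dt}\right)^m\frac{1}{1-e^{-t}}, \]
the polynomial part of $d_k$ contributes $a_0 G_n(t)+a_1 G_{n-1}(t)+\cdots$. Each $G_m$ is meromorphic on $\mathbb{C}$ with poles only along $2\pi i\mathbb{Z}$, which already supplies the claimed meromorphic extension. Near $t=0$ the Bernoulli expansion gives
\[ \frac{1}{1-e^{-t}}=\frac{1}{t}+\frac{1}{2}+\frac{t}{12}+O(t^3), \]
and differentiating term by term shows that $G_m(t)=m!\,t^{-m-1}+O(t^{1-m})$: the potential intermediate term of order $t^{-m}$ would have to come from the constant $\tfrac12$, which is annihilated by $(-d/dt)^m$ for $m\geq 1$, so it vanishes. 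Hence $G_n(t)=n!\,t^{-n-1}+O(t^{1-n})$ and $G_{n-1}(t)=(n-1)!\,t^{-n}+O(t^{2-n})$, while $a_iG_{n-i}$ for $i\geq 2$ contributes only $O(t^{1-n})$. Summing reproduces exactly the two claimed leading coefficients $a_0n!/t^{n+1}$ and $a_1(n-1)!/t^{n}$.

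The delicate point, and the one that motivates using the index character rather than the raw Riemann--Roch expansion, is the contribution of the periodic term $\rho(k)k^{n-1}$. I would expand the period-$m$ function $\rho$ into characters, $\rho(k)=\sum_j c_j\,\zeta^{jk}$ with $\zeta=e^{2\pi i/m}$, where the sum omits $j\equiv 0\ (\mathrm{mod}\ m)$ precisely because $\rho$ has average zero. Each character then contributes $c_j\sum_k k^{n-1}e^{-(t-2\pi i j/m)k}=c_j\,G_{n-1}(t-2\pi i j/m)$, whose argument tends to $-2\pi i j/m\notin 2\pi i\mathbb{Z}$ as $t\to 0$. Since $G_{n-1}$ is holomorphic off $2\pi i\mathbb{Z}$, each such term is holomorphic, hence $O(1)$, near $t=0$; it is exactly the vanishing of the average of $\rho$ that removes the single resonant mode $j\equiv 0$ which would otherwise strike the pole at the origin. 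The same reasoning disposes of the lower-order periodic corrections. Collecting the polynomial contribution, the $O(1)$ periodic contribution, and the entire small-$k$ error yields the stated expansion, with the periodic terms invisible as desired. The main obstacle is precisely this periodic estimate, where the zero-average normalization does the essential work.
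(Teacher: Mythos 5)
Your proof is correct, and its skeleton coincides with the paper's: collapse the index character to the one--variable series $\sum_k d_k e^{-tk}$, substitute the orbifold Riemann--Roch quasi-polynomial, and extract the two leading Laurent coefficients from $\left(-\tfrac{d}{dt}\right)^m \tfrac{1}{1-e^{-t}}$, noting that the analytic part of $\tfrac{1}{1-e^{-t}}$ can never produce singular terms (your extra remark that discarding finitely many small-$k$ terms only costs an entire function is a point the paper elides, and is worth having). The one genuine divergence is the delicate step, the periodic coefficient $\rho$: you expand $\rho$ in characters, $\rho(k)=\sum_{j\not\equiv 0} c_j e^{2\pi i jk/d}$, so that each mode contributes $c_j G_{n-1}(t-2\pi i j/d)$, holomorphic at $t=0$ because the pole set of $G_{n-1}$ is $2\pi i\mathbb{Z}$ and average zero kills exactly the resonant mode $j\equiv 0$. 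The paper instead uses the zero average to telescope, obtaining $G(t)\left(1+e^t+\cdots+e^{(d-1)t}\right)=H(t)$ with $H$ a finite (hence entire) sum and the denominator nonvanishing at $0$, giving analyticity of $G$ near the origin without any Fourier theory. The two mechanisms are equivalent in effect; your route makes the pole locations $2\pi i(\mathbb{Z}+j/d)$ transparent (which also explains the pole structure asserted later in Theorem~\ref{ind char exp}), while the paper's is slightly more elementary and self-contained. Either argument completes the proof.
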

\begin{proof}
	By definition we have
\begin{equation*}
\begin{aligned}
F(\xi,t) &= \sum_{k=0}^\infty e^{-kt}\,\dim H^0(L^k) \\
		&= \sum_{k=0}^\infty e^{-kt}\left( a_0k^n +
		(a_1 + \rho)k^{n-1}+O(k^{n-2})\right).
\end{aligned}
\end{equation*}
Note that 
 \begin{equation*}
 \sum_k e^{-tk} = \frac{1}{1-e^{-t}} = \frac{1}{t} + f(t),
 \end{equation*}
 where $f$ is analytic, so differentiating $n$ times with respect to $t$, we get
\begin{equation*}
\sum_k e^{-tk} k^n = \frac{n!}{t^{n+1}} + (-1)^n f^{(n)}(t).
\end{equation*}
Moreover, $ G(t) = \sum_k \rho(k)e^{-tk}$ is analytic near $t=0$ since
$\rho$ has average zero. Indeed if $d$ is the period of $\rho$ then we
have 
\begin{equation*}
	\sum_k (\rho(k)+\rho(k+1)+\ldots+\rho(k+d-1))e^{-tk} = 0,
	\end{equation*}
and so
\begin{equation*}
	G(t) + e^t(G(t)-\rho(0)) + \ldots + e^{(d-1)t} \left( G(t)
	-\sum_{k=0}^{d-2}\rho(k)e^{-kt}\right) = 0 ,
\end{equation*}
and therefore 
\begin{equation*}
G(t) = \frac{H(t)}{1+e^t+e^{2t}+\ldots+e^{(d-1)t}}
\end{equation*}
where $H(t)$ is analytic since it is a finite sum. It follows that
$G(t)$ is also analytic near $0$, with poles at $t=\frac{2\pi i k}{d}$
for non-zero integers $k\not=0$. Finally it follows that $F(\xi,t)$ is
meromorphic near $t=0$ with a pole at the origin, and we
have
\begin{equation*}
	F(\xi,t) = \frac{a_0n!}{t^{n+1}} + \frac{a_1(n-1)!}{t^n} + O(t^{1-n}).
\end{equation*}
\end{proof}

In particular, we can read off the Riemann-Roch coefficients from the
index character. 
The main advantage of this observation is that the index
character is defined even when the Reeb vector field is irrational, and
we can hope to use the asymptotics at $t=0$ to extract the coefficients
$a_0, a_1$ which are needed for the definition of the Futaki invariant.
In addition we will see that these coefficients vary
smoothly as we vary $\xi$, so we will be able to use an approximation
argument to prove our main result, Theorem~\ref{Don thm}. The difficutly
of working more directly with the Riemann-Roch expansions on the
quotient orbifolds is that as we
vary the Reeb field, the orbifolds change, and the periodic terms in the
expansions can become more and more unmanageable. The index character,
on the other hand encodes the relevant coefficients in the Riemann-Roch
expansions for all quotient orbifolds at the same time, so it becomes
possible to study their variation as we vary $\xi$. In addition the
index character is essentially the multivariate Hilbert series of a
multigraded module, and thus it can be readily computed in examples.

The main observation is that fixing a Reeb field $\xi$ gives rise to
a grading on
$R=\C[x_{1},\dots,x_{N}]$, and on $R/I$, where $I$ cuts out the variety
$Y$. Writing $HS_{R/I}(t)$ for the Hilbert series of the graded ring $R/I$,
we have
\begin{equation*}
F(\xi,t) = HS_{R/I}(e^{-t}).
\end{equation*}

We will now assume that the embedding $Y\subset\mathbb{C}^N$ is obtained
through an application of Lemma~\ref{lin act}. The corresponding ideal
$I\subset R$ is then homogeneous with respect
to a multigrading on $R$.

More precisely, let $E := \{e^{*}_{1}, \dots,
e^{*}_{s}\}$ be an integral basis of $\mathbb{R}^{s} \cong \mathfrak{t}^*$ 
and let
$\alpha_{i}$ be the weight of the representation on the generator
$x_{i}$ of $R$.  Expressing the $\alpha_{i}$ in the basis $E$ yields an $s
\times N$ matrix
\begin{equation}
W =\begin{pmatrix}
\alpha_{1,1}&\alpha_{1,2}&\dots&\alpha_{1,N}\\
\vdots & \vdots&\ddots&\vdots \\
\alpha_{s,1}& \alpha_{s,2}&\dots&\alpha_{s,N}
\end{pmatrix}
\end{equation}
with integer entries.
Since, $R$ is graded by $W$, and $I$ is homogenous, it
follows that $R/I$ is a $W$-graded $R$ module, generated in degree zero.

\begin{defn}
Let $s\geq 1$, let $R$ be graded by a matrix $W$ of rank $s$ in
$Mat_{s,N}(\mathbb{Z})$, and let $\alpha_{1}, \dots, \alpha_{s}$ be the rows of
$W$.  The grading on $R$ given by $W$ is of \emph{positive type} if
there exists $a_{1},\dots, a_{s} \in \mathbb{Z}$ such that all the
entries of $a_{1}\alpha_{1} + \dots + a_{s}\alpha_{s}$ are positive.
\end{defn}

\begin{lem} If there exists a Reeb vector field in $\mathfrak{t}$, then the
	grading induced by $W$ is of positive type.
\end{lem}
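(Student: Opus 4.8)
The plan is to reinterpret the positive-type condition as the existence of an integral vector in a certain open cone, and then to observe that the hypothesized Reeb field already lies in that cone. First I would unwind the definition: with $\alpha_1,\dots,\alpha_s$ denoting the rows of $W$, the $j$-th entry of a combination $a_1\alpha_1+\dots+a_s\alpha_s$ equals $\sum_{i=1}^s a_i\alpha_{i,j}$, which is precisely $w_j(a)$, where $w_j\in\mt^*$ is the weight of the generator $x_j$ and $a=(a_1,\dots,a_s)\in\mt$ is read off in the basis dual to $E$. Thus the grading is of positive type exactly when the open cone $\{a\in\mt \mid w_j(a)>0 \text{ for all } j\}$ contains an integral point.

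Next I would produce a point of this cone. By hypothesis there is a Reeb field $\xi\in\mt$, so by definition of the Reeb cone $\mathcal{C}_R$ we have $\alpha(\xi)>0$ for every nonzero weight $\alpha$ occurring in $\mathcal{H}=R/I$. The key claim is that each generator weight $w_j$ is such a nonzero weight, whence $w_j(\xi)>0$ for all $j$ and $\xi$ lies in the cone above. Granting this, the lemma reduces to the elementary fact that a nonempty open cone cut out by finitely many integral linear inequalities contains an integral point: it contains rational points by density and openness, and clearing denominators produces the integral witness $a$.

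The one step requiring care, and the main obstacle, is the claim that every $w_j$ is a nonzero weight of $\mathcal{H}$, equivalently that no generator $x_j$ maps to a constant. Here I would use that the embedding arises from Lemma~\ref{lin act}, so the images $u_j$ of the $x_j$ may be taken to be non-constant homogeneous generators of $\mathcal{H}$; any constant generator is redundant and can be discarded. Equivalently, since $Y$ is not contained in a linear subspace no $x_j$ vanishes identically on $Y$, and since $I$ is homogeneous and $Y$ is nonempty no $x_j$ can map to a nonzero constant. Finally, a non-constant homogeneous $u_j$ cannot have weight zero, for a weight-zero regular function is $\xi$-invariant while the Reeb condition forces every non-constant function to have strictly positive $\xi$-weight; hence $w_j\neq 0$ and $w_j(\xi)>0$, which completes the argument.
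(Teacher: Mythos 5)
Your core argument is sound, and it is essentially the paper's: the hypothesized Reeb field $\xi$ already satisfies $w_j(\xi)>0$ for every generator weight $w_j$ (granting, as you carefully argue, that the generators coming from Lemma~\ref{lin act} may be taken non-constant and homogeneous of nonzero weight), and an integral witness then exists because the cone $\{a\in\mathfrak{t} : w_j(a)>0 \text{ for all } j\}$ is open, nonempty, and cut out by finitely many integral linear forms. The only real difference in this part is how integrality is produced: you use density of rational points in a nonempty open rational cone and clear denominators, while the paper invokes Corollary~\ref{approx cor} (the Reeb cone is rational polyhedral) to replace $\xi$ by an integral Reeb field outright. These are the same idea, and your version is, if anything, more self-contained.

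There is, however, a genuine gap: you never verify that $W$ has rank $s$. In this paper ``positive type'' is defined only for gradings by a matrix $W\in Mat_{s,N}(\mathbb{Z})$ \emph{of rank} $s$, so the rank condition is part of what the lemma asserts, and it is needed for the Kreuzer--Robbiano results quoted immediately afterwards to apply. The paper's proof devotes its first step to exactly this point: if $v^{T}\cdot W=0$ for some $v\neq 0$, then the action induced by $v$ on $\mathbb{C}^N$, hence on $Y$, is trivial, contradicting the standing assumption that $T$ acts \emph{effectively}. Note that the rank statement does not follow from what you prove: a matrix whose two rows are both $(1,1,\dots,1)$ admits a positive integral combination of its rows yet has rank $1<2$ (this is precisely the situation of a non-effective torus). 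So your proof needs this one additional step, and it requires an ingredient -- effectiveness of the $T$-action -- that your argument never invokes.
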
 
\begin{proof} We first need to show that $W$ has rank $s$.
	Observe that if $v^{T}\cdot W =0$, then the action induced by
	$v$ is trivial.  In particular, the action of $T$ is not
	effective.  Secondly, by Corollary~\ref{approx cor} 
	we can assume that there is an integral Reeb field
	$\xi\in\mathfrak{t}$, given in terms of the dual basis
	$\{e_1,\ldots,e_s\}$ by a vector $(a_1,\ldots,a_s)$ with $a_i\in
	\mathbb{Z}$. The entries of $a_1\alpha_1 + \ldots a_s\alpha_s$
	are the weights of the action induced by $\xi$ on the generators
	$x_1,\ldots,x_N$. By definition of a Reeb field these are all
	positive. 
\end{proof}

We will now recall some results about multigradings and the multigraded
Hilbert function. 

\begin{lem}[\cite{KR}, Proposition 4.1.19] Let $R = \mathbb{C}[x_{1},
	\dots, x_{N}]$ be graded by a matrix $W \in
	Mat_{m,N}(\mathbb{Z})$ of positive type, and let $M$ be a
	finitely generated graded $R$-module. Then, \begin{enumerate}
	\item $R_{0} = \C$.  That is, the degree zero elements
			in $R$ are precisely the constants.
	\item For every $d \in \mathbb{Z}^{m}$, we have
		$\dim_{\mathbb{C}}(M_{d}) <\infty$
\end{enumerate}
\end{lem}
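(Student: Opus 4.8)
The plan is to reduce the multigrading by $W$ to a single positive $\mathbb{Z}$-grading, after which both assertions become elementary finiteness statements. The \emph{positive type} hypothesis furnishes integers $a_1,\dots,a_m$ such that the row vector $a_1\alpha_1+\dots+a_m\alpha_m\in\mathbb{Z}^N$ has all entries positive. Viewing $\phi:=(a_1,\dots,a_m)$ as a linear functional on the grading group $\mathbb{Z}^m$, I obtain a coarser grading of $R$ in which $x_i$ has degree $\phi(w_i)>0$, where $w_i\in\mathbb{Z}^m$ denotes the $W$-degree of $x_i$ (the $i$-th column of $W$), since $\phi(w_i)$ is exactly the $i$-th entry of $a_1\alpha_1+\dots+a_m\alpha_m$. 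This is an honest $\mathbb{N}$-grading with every generator in strictly positive degree, and the point is that each $W$-homogeneous component refines a $\phi$-homogeneous one.

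For part (1), a monomial $x^e=x_1^{e_1}\cdots x_N^{e_N}$ has $W$-degree $\sum_i e_i w_i$, so if it lies in $R_0$ then $\sum_i e_i\phi(w_i)=0$. As each $\phi(w_i)$ is a positive integer and each $e_i\geq 0$, this forces $e_i=0$ for all $i$, whence $x^e=1$. Therefore $R_0$ is spanned by the constant monomial and $R_0=\mathbb{C}$.

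For part (2), I first treat $M=R$. Fixing $d\in\mathbb{Z}^m$, the component $R_d$ is spanned by the monomials $x^e$ with $\sum_i e_i w_i=d$; applying $\phi$ gives $\sum_i e_i\phi(w_i)=\phi(d)$. If $\phi(d)<0$ there are no such monomials and $R_d=0$, while otherwise each exponent obeys $0\leq e_i\leq \phi(d)/\phi(w_i)$, so only finitely many monomials contribute and $\dim_{\mathbb{C}}R_d<\infty$. The general case follows from finite generation: choosing homogeneous generators $m_1,\dots,m_k$ of $M$ with $W$-degrees $d_1,\dots,d_k$, every element of $M_d$ is a sum of terms $r_j m_j$ with $r_j\in R_{d-d_j}$ homogeneous, so $M_d=\sum_j R_{d-d_j}m_j$ and hence $\dim_{\mathbb{C}}M_d\leq\sum_j\dim_{\mathbb{C}}R_{d-d_j}<\infty$.

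The heart of the argument — and the only place any idea is needed — is the passage to the functional $\phi$: positive type says precisely that the $W$-grading admits a positivity-preserving map to $\mathbb{Z}$, which converts the a priori unbounded multigraded finiteness question into the standard fact that a polynomial ring graded with all variables in positive degree has finite-dimensional graded pieces. Once this reduction is in hand no genuine obstacle remains, since everything reduces to bounding the nonnegative integer solutions of a single linear equation $\sum_i e_i\phi(w_i)=\phi(d)$.
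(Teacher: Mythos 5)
Your proof is correct. The paper gives no argument for this lemma at all --- it is quoted directly from Kreuzer--Robbiano \cite{KR}, Proposition 4.1.19 --- and your reduction of the $W$-multigrading to a positive $\mathbb{N}$-grading via the functional $\phi=(a_1,\dots,a_m)$ supplied by the positive-type hypothesis, followed by the monomial count for $R$ and the passage to finitely generated modules through homogeneous generators, is exactly the standard argument behind the cited result, so your write-up simply fills in the omitted citation correctly.
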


The previous lemma indicates that the following definition makes sense;

\begin{defn}[\cite{KR}, Definition 5.8.8, 5.8.11]
Let $R$ be graded by a matrix $W \in Mat_{m,N}(\mathbb{Z})$, and let $M$
be a finitely generated, graded $R$ module.  Then the map
\[ \begin{aligned}
	HF_{M,W}:\mathbb{Z}^{m} &\rightarrow \mathbb{Z} \\
	(i_{1},\dots,i_{m}) &\mapsto \dim_{\mathbb{C}}(M_{i_{1},\dots, i_{m}})
\end{aligned}\]
for all $(i_{1},\dots, i_{m}) \in \mathbb{Z}^{m}$ is called the
\emph{multigraded Hilbert function} of M with respect to the grading
$W$.  We may define the multivariate power Hilbert series of $M$ with
respect to the grading $W$ by
\begin{equation}
HS_{M,W}(z_{1},\dots z_{m}) = \sum_{(i_{1},\dots, i_{m})\in
\mathbb{Z}^{m}} HF_{M,W}(i_{1},\dots, i_{m})z_{1}^{i_{1}}\cdots
z_{m}^{i_{m}} \in \mathbb{Z}[[{\mathbf z}, {\mathbf z}^{-1}]]
\end{equation}
\end{defn} 

The following lemma provides a convenient characterization of
multivariate Hilbert series under changes in the grading.

\begin{lem}[\cite{KR}, Proposition 5.8.24]\label{HS grading change}
Let $W \in Mat_{m,N}(\mathbb{Z})$, and $A = (a_{ij}) \in
Mat_{l,m}(\mathbb{Z})$ be two matrices such that the gradings on $R =
\mathbb{C}[z_{0},\dots, z_{N}]$ given by $W$ and $A\cdot W$ are both of
positive type.  Let $M$ be a finitely generated $R$-module which is
graded with respect to the grading given by $W$.  Then the Hilbert
series of $M$ with respect to the grading given by $A\cdot W$ is given
by
\begin{equation*}
HS_{M, A\cdot W}(z_{1},\dots,z_{l}) = HS_{M,W}(z_{1}^{a_{11}}\cdots
z_{l}^{a_{l1}}, \dots, z_{1}^{a_{1m}}\cdots z_{l}^{a_{lm}})
\end{equation*}
\end{lem}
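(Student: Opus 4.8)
The plan is to observe that the grading given by $A\cdot W$ is nothing but the coarsening of the $W$-grading along the group homomorphism $A\colon\mathbb{Z}^m\to\mathbb{Z}^l$, after which the asserted identity reduces to a reorganization of the defining sum for the Hilbert series. Concretely, I would first record how the two gradings interact on homogeneous components. If $M=\bigoplus_{d\in\mathbb{Z}^m}M_d$ is the decomposition into $W$-homogeneous pieces, then any element of $W$-degree $d$ acquires $(A\cdot W)$-degree $Ad$, since the degree under $A\cdot W$ is obtained from the degree under $W$ by applying the linear map $A$. Consequently the $(A\cdot W)$-homogeneous component of degree $e\in\mathbb{Z}^l$ splits as
\begin{equation*}
M^{A\cdot W}_e = \bigoplus_{d\,:\,Ad=e} M_d,
\end{equation*}
and taking dimensions yields $HF_{M,A\cdot W}(e) = \sum_{d\,:\,Ad=e} HF_{M,W}(d)$.

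The step where I expect the only genuine subtlety, and which is the main obstacle, is justifying that the substitution on the right-hand side is well defined as a formal (Laurent) power series, i.e.\ that each monomial $z^e$ collects contributions from only finitely many $W$-degrees $d$ with $Ad=e$. This is exactly where the positive-type hypotheses are used. Because $A\cdot W$ is of positive type, the preceding lemma (\cite{KR}, Proposition 4.1.19) guarantees $HF_{M,A\cdot W}(e)<\infty$ for every $e$; since the summands $HF_{M,W}(d)$ are non-negative integers whose total over $\{d:Ad=e\}$ is finite, only finitely many of them can be non-zero. This is precisely the finiteness needed for the substitution to make sense and for the rearrangement below to be legitimate.

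Finally I would substitute and reorganize. Writing $w_k = z_1^{a_{1k}}\cdots z_l^{a_{lk}}$ for $k=1,\dots,m$, one checks directly that $\prod_{k=1}^m w_k^{d_k} = \prod_{j=1}^l z_j^{\sum_{k}a_{jk}d_k} = z^{Ad}$, so that plugging these monomials into $HS_{M,W}$ and grouping terms according to the value of $Ad$ gives
\begin{equation*}
HS_{M,W}(w_1,\dots,w_m) = \sum_{d\in\mathbb{Z}^m} HF_{M,W}(d)\,z^{Ad} = \sum_{e\in\mathbb{Z}^l}\Big(\sum_{d\,:\,Ad=e} HF_{M,W}(d)\Big) z^e = HS_{M,A\cdot W}(z),
\end{equation*}
where the last equality uses the componentwise identity established in the first step. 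Every manipulation here is formal once the finiteness from the second step is in hand, so the argument is complete modulo that well-definedness check.
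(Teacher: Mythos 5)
Your proof is correct. Note that the paper itself offers no argument for this lemma---it is quoted as a black box from \cite{KR}, Proposition 5.8.24---and your proof (interpreting the $A\cdot W$-grading as the coarsening of the $W$-grading along $A\colon\mathbb{Z}^m\to\mathbb{Z}^l$, invoking the positive-type hypothesis through the finite-dimensionality statement of \cite{KR}, Proposition 4.1.19 to ensure that each coefficient $\sum_{d:\,Ad=e}HF_{M,W}(d)$ is a finite sum, and then regrouping the defining series) is precisely the standard argument establishing that proposition, including the correct identification of the well-definedness of the monomial substitution as the only genuinely nontrivial point.
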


If R is graded by $W = (w_{ij}) \in Mat_{m,n}(\mathbb{Z})$ of positive
type, and $\xi$ is a Reeb field, then the grading induced by $\xi^{T}
\cdot W$ is clearly of positive type, and so the above lemma describes
the relation between the multigraded Hilbert series and the index
character.  The next proposition describes the general shape of
multivariable Hilbert series.

\begin{prop}[\cite{KR}, Corollary 5.8.19]\label{HS shape prop}
Let $R$ be graded by $W \in Mat_{m,N}(\mathbb{Z}) $, a matrix of
positive type.  Let $M$ be a finitely generated, graded $R$-module, and
$(m_{1},\dots,m_{r})$ be a tuple of non-zero homogeneous elements of $M$
which form a minimal system of generators.  For $i=1,\dots, r$, let
$d_{i} = deg_{W}(m_{i})$.  Then  the multivariate Hilbert series of $M$
has the following form;
\begin{equation*}
HS_{M,W}(z_{1},\dots z_{m}) = \frac{z_{1}^{\alpha_{1}}\cdots
z_{m}^{\alpha_{m}} \cdot
HN(z_{1},\dots,z_{m})}{\prod_{j=1}^{N}(1-z_{1}^{w_{1j}}\cdots
z_{m}^{w_{mj}})}
\end{equation*}
where $(\alpha_{1},\dots,\alpha_{m})$ is the component wise minimum of
$\{d_{i}\}$, and $HN_{M}(z_{1},\dots,z_{m})$ is a polynomial in
$\mathbb{Z}[z_{1},\dots,z_{m}]$.
\end{prop}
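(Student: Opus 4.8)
The plan is to prove this as the multigraded incarnation of the Hilbert--Serre theorem, by reducing the computation for an arbitrary finitely generated $M$ to that of free modules via a finite graded free resolution. The two ingredients are the additivity of the Hilbert series along graded short exact sequences and the computation of the series of a single shifted copy of $R$. First I would record additivity: for a graded short exact sequence $0\to A\to B\to C\to 0$ of finitely generated graded $R$-modules, each graded piece is finite dimensional (this is exactly the content of the preceding lemma, \cite{KR}, Proposition 4.1.19, which uses that $W$ is of positive type), and $\dim_{\mathbb C}$ is additive on graded pieces, so $HS_{B,W}=HS_{A,W}+HS_{C,W}$ in $\mathbb{Z}[[\mathbf z,\mathbf z^{-1}]]$.

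Next I would compute the series of $R$ itself: a monomial $x^a$, $a\in\mathbb{N}^N$, has $W$-degree $\sum_{j} a_j w_j$, where $w_j$ denotes the $j$-th column of $W$, so summing the geometric series in each variable gives
\[
HS_{R,W}(\mathbf z)=\prod_{j=1}^N\frac{1}{1-z_1^{w_{1j}}\cdots z_m^{w_{mj}}},
\]
and twisting by a degree $d$ multiplies this by $z^d:=z_1^{d_1}\cdots z_m^{d_m}$. With these in hand, I would invoke Hilbert's syzygy theorem to produce a finite graded free resolution $0\to F_p\to\cdots\to F_0\to M\to 0$ with each $F_i=\bigoplus_k R(-d_{ik})$, taken to be minimal, so that $F_0=\bigoplus_{i=1}^r R(-d_i)$ records precisely the given minimal generators $m_i$ of degree $d_i$. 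Applying additivity along the resolution together with the computation of $HS_{R(-d),W}$ then yields
\[
HS_{M,W}(\mathbf z)=\frac{\sum_{i,k}(-1)^i z^{d_{ik}}}{\prod_{j=1}^N\bigl(1-z_1^{w_{1j}}\cdots z_m^{w_{mj}}\bigr)},
\]
so the denominator already has the asserted shape and the numerator is a finite integral combination of monomials. It then remains to factor out the monomial $z^{\alpha}$ with $\alpha_l=\min_i (d_i)_l$ and set $HN:=z^{-\alpha}\cdot(\text{numerator})$.

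The routine part is everything up to the displayed formula; the delicate part, and the main obstacle, is the final normalization of the numerator. One must check that after dividing by $z^\alpha$ the remaining expression has no negative exponents, i.e. is a genuine element of $\mathbb{Z}[z_1,\dots,z_m]$ rather than merely a Laurent polynomial. This is where positivity is essential: if $W$ has columns with negative entries, the numerator coming from the raw resolution can genuinely carry negative exponents, and one must control the support of $M$, equivalently the degrees $d_{ik}$ of the higher syzygies, relative to the $d_i$, using the positive-type hypothesis and the minimality of the resolution, in order to guarantee both that $\alpha$ is the componentwise minimum of the generator degrees and that the leftover exponents are non-negative. In the use we make of this proposition the issue is moot, since we will specialize to the single positive grading $\xi^T W$ via Lemma~\ref{HS grading change}, under which the series collapses to an honest one-variable rational function whose numerator is patently a polynomial.
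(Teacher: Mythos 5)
First, a contextual point: the paper offers no proof of Proposition~\ref{HS shape prop} at all --- it is imported verbatim from \cite{KR}, Corollary 5.8.19 --- so there is no internal argument to compare yours against, and your proposal must be judged on its own terms. The reduction you perform is the standard Hilbert--Serre route and is correct as far as it goes: additivity of $HS$ along graded short exact sequences (legitimate because positive type makes every graded piece finite-dimensional), the computation $HS_{R(-d),W}(\mathbf{z})=z^{d}\prod_{j=1}^N(1-z_1^{w_{1j}}\cdots z_m^{w_{mj}})^{-1}$, and a finite minimal graded free resolution (graded Nakayama holds by positivity, finiteness by Hilbert's syzygy theorem) together give
\[
HS_{M,W}(\mathbf{z})\cdot\prod_{j=1}^N\bigl(1-z_1^{w_{1j}}\cdots z_m^{w_{mj}}\bigr)=\sum_{i,k}(-1)^i z^{d_{ik}},
\]
a Laurent polynomial. (You should also specify the ring in which these formal series are multiplied --- positive type confines all supports to translates of a pointed cone, which is what makes the products well defined --- but that is routine.)

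The step you flag as the ``main obstacle,'' however, is a genuine gap, and in fact it cannot be closed, because the clause it is meant to establish is false under the stated hypotheses. Take $R=\mathbb{C}[x_1,x_2]$ graded by
\[
W=\begin{pmatrix} 1 & 1\\ -1 & 0\end{pmatrix},
\]
so that $\deg x_1=(1,-1)$ and $\deg x_2=(1,0)$; this grading is of positive type, since the row combination $1\cdot(1,1)+0\cdot(-1,0)=(1,1)$ has all entries positive. Let $M=R/(x_1)$, which is minimally generated by a single element of degree $(0,0)$, so the proposition asserts $\alpha=(0,0)$ and $HN\in\mathbb{Z}[z_1,z_2]$. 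But $M\cong\mathbb{C}[x_2]$ with $x_2^k$ in degree $(k,0)$, hence $HS_{M,W}=\sum_{k\geq 0}z_1^k$ and
\[
HS_{M,W}\cdot(1-z_1z_2^{-1})(1-z_1)=1-z_1z_2^{-1}\notin\mathbb{Z}[z_1,z_2].
\]
The reason minimality of the resolution cannot rescue the argument is that syzygy degrees lie in $\bigcup_i\,(d_i+Q)$, where $Q$ is the monoid generated by the columns of $W$; once some column has a negative entry, $d_i+Q$ is simply not contained in $\alpha+\mathbb{Z}_{\geq 0}^m$. The componentwise-minimum statement becomes true precisely when one adds the hypothesis that the columns of $W$ are componentwise nonnegative --- and then it follows from a two-line support argument ($z^{-\alpha}HS_{M,W}$ has support in $\mathbb{Z}_{\geq0}^m$, and it is already known to become a Laurent polynomial after multiplying by the denominator), with no syzygy analysis at all; presumably this extra hypothesis, or else the weaker conclusion that $HN$ is only a Laurent polynomial, is what \cite{KR} actually asserts. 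Your closing observation is the right one: the weaker conclusion is all the paper ever uses, since in Theorem~\ref{ind char exp} one substitutes $z_i=e^{-t\xi_i}$, and a Laurent-polynomial numerator is still a finite sum of exponentials --- entire in $t$ and smooth in $\xi$ --- so the pole structure comes entirely from the denominator. Thus your argument establishes everything needed downstream, but it does not (and cannot) prove Proposition~\ref{HS shape prop} as literally stated.
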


We can now translate this result to the language of index characters. 

\begin{thm}\label{ind char exp}
Let $Y \subset \C^{N}$ be an affine scheme of dimension $n+1$, and
suppose that $T \subset GL(N,\C)$ is a torus acting effectively,
diagonally and holomorphically on $Y$.  Let $\mt$ be the Lie algebra of
$T$, and $\mathcal{C}_{R} \subset \mt$ be the Reeb cone. For fixed
$\xi\in \mathcal{C}_R$ the index character
$F(\xi, t)$ has a meromorphic extension to $\mathbb{C}$ 
with poles along the imaginary axis. Near $t=0$ it has a Laurent series
\begin{equation}
F(\xi, t) = \frac{a_{0}(\xi)n!}{t^{n+1}} + \frac{a_{1}(\xi)(n-1)!}{t^{n}} +
\dots,
\end{equation}
where $a_{i}(\xi)$ depend smoothly on $\xi \in \mathcal{C}_{R}$, and $a_{0}(\xi)>0$.  
\end{thm}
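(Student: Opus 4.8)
The plan is to read everything off the explicit rational form of the index character supplied by the multigraded Hilbert series, and then to control the dependence on $\xi$ by a contour–integral argument. Since the embedding is the one produced by Lemma~\ref{lin act}, the ring $\mathcal H = R/I$ carries the $W$-grading, so specializing this grading along $\xi=(\xi_1,\dots,\xi_s)$ — that is, applying the change-of-grading Lemma~\ref{HS grading change} with $A=\xi^T$ — gives
\begin{equation*}
F(\xi,t) = HS_{\mathcal H, W}\big(e^{-t\xi_1},\dots,e^{-t\xi_s}\big).
\end{equation*}
Inserting the shape of the multivariate Hilbert series from Proposition~\ref{HS shape prop} yields
\begin{equation*}
F(\xi,t) = \frac{g(\xi,t)}{\prod_{j=1}^N\big(1 - e^{-t\alpha_j(\xi)}\big)},
\end{equation*}
where $\alpha_j(\xi)=\sum_i w_{ij}\xi_i$ is the weight of $\xi$ on $x_j$ and $g(\xi,t)=e^{-t(\mathbf a\cdot\xi)}HN(e^{-t\xi_1},\dots,e^{-t\xi_s})$, with $\mathbf a$ the minimal multidegree of Proposition~\ref{HS shape prop}, is entire in $t$ and jointly real-analytic in $(\xi,t)$. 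Both numerator and denominator are entire, so $F(\xi,t)$ is meromorphic on all of $\mathbb{C}$; and because $\xi$ is a Reeb field the weights $\alpha_j(\xi)$ are strictly positive, so the denominator vanishes only where $t\alpha_j(\xi)\in 2\pi i\mathbb{Z}$, i.e.\ only at purely imaginary $t$. This gives meromorphy with all poles confined to the imaginary axis.

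Next I would realize the Laurent coefficients at $t=0$ as contour integrals, which serves both to get smoothness in $\xi$ and to pin down the pole order. For $\xi$ in a compact neighborhood $K$ of any fixed $\xi_0\in\mathcal C_R$, the nonzero poles $2\pi i/\alpha_j(\xi)\cdot\mathbb{Z}$ stay a uniform distance $\delta>0$ from the origin, since $\max_j\alpha_j(\xi)$ is bounded on $K$. Thus for a fixed small $\rho<\delta$ the circle $|t|=\rho$ encloses only $t=0$ for every $\xi\in K$, and writing $F(\xi,t)=\sum_\ell c_\ell(\xi)t^\ell$ each coefficient
\begin{equation*}
c_\ell(\xi) = \frac{1}{2\pi i}\oint_{|t|=\rho} t^{-\ell-1} F(\xi,t)\,dt
\end{equation*}
is, by joint analyticity of the integrand on the contour, a smooth (indeed real-analytic) function of $\xi$ on $K$, hence on all of $\mathcal C_R$. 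This is exactly the mechanism that makes the coefficients vary smoothly with the Reeb field.

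To identify the pole at $t=0$ I would feed in the rational case. On the dense set of rational $\xi\in\mathcal C_R$ (nonempty by Corollary~\ref{approx cor}), Proposition~\ref{ind char RR} shows the pole has order exactly $n+1$, so the coefficients $c_{-(n+2)}(\xi),\dots,c_{-N}(\xi)$ vanish there; being continuous in $\xi$ and vanishing on a dense set, they vanish on all of $\mathcal C_R$. Hence the pole order is at most $n+1$ everywhere, the Laurent tail has the claimed shape $a_0(\xi)n!\,t^{-(n+1)}+a_1(\xi)(n-1)!\,t^{-n}+\cdots$, and $a_0(\xi)=c_{-(n+1)}(\xi)/n!$, $a_1(\xi)=c_{-n}(\xi)/(n-1)!$ are smooth. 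It remains to prove $a_0(\xi)>0$ for every $\xi$, which also upgrades the pole order to exactly $n+1$. \emph{I expect this strict positivity at irrational $\xi$ to be the main obstacle:} the density argument only controls the higher-order coefficients and, together with $F(\xi,t)>0$ for real $t>0$, gives $a_0\ge 0$; it does not by itself rule out the leading coefficient dropping to zero at a special irrational Reeb field.

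I would settle this by a monotone comparison against a rational Reeb field. Fix any rational $\xi'\in\mathcal C_R$. By Definition~\ref{Reeb field defn} there is $\epsilon'>0$ with $\alpha(\xi')\ge\epsilon'|\alpha|$ for all nonzero weights, while $\alpha(\xi)\le|\xi|\,|\alpha|$, so $\alpha(\xi)\le\kappa\,\alpha(\xi')$ with $\kappa=|\xi|/\epsilon'$. For real $t>0$ each term obeys $e^{-t\alpha(\xi)}\ge e^{-\kappa t\,\alpha(\xi')}$, and since $\dim\mathcal H_\alpha\ge0$,
\begin{equation*}
F(\xi,t)\ \ge\ \sum_\alpha e^{-\kappa t\,\alpha(\xi')}\dim\mathcal H_\alpha\ =\ F(\xi',\kappa t).
\end{equation*}
Multiplying by $t^{n+1}$ and letting $t\to 0^+$, the left side tends to $a_0(\xi)n!$ (the limit exists since the pole order is at most $n+1$), while the right side tends to $\kappa^{-(n+1)}a_0(\xi')n!$. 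As $a_0(\xi')>0$ in the rational case — it is the positive leading term of the orbifold Hilbert polynomial — I conclude $a_0(\xi)\ge\kappa^{-(n+1)}a_0(\xi')>0$ for all $\xi\in\mathcal C_R$, which completes the proof.
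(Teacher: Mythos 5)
Your proof follows essentially the same route as the paper's: both specialize the multigraded Hilbert series (Proposition~\ref{HS shape prop} via the change-of-grading Lemma~\ref{HS grading change}) to get the explicit rational form of $F(\xi,t)$, read off meromorphy and the confinement of poles to the imaginary axis from the denominator $\prod_{j}\bigl(1-e^{-t\alpha_j(\xi)}\bigr)$, obtain smoothness of the Laurent coefficients from the Cauchy integral formula over a circle valid uniformly for $\xi$ in compact subsets of $\mathcal{C}_R$, and fix the pole order by comparison with the rational case (Proposition~\ref{ind char RR}). The one point where you go beyond the paper is the strict positivity $a_0(\xi)>0$ at irrational $\xi$: the paper's proof asserts the pole order and positivity by appeal to the rational case without spelling out the argument, whereas your density-plus-continuity step (killing coefficients below order $-(n+1)$) and especially your monotone comparison $F(\xi,t)\ge F(\xi',\kappa t)$ against a rational Reeb field $\xi'$ supply a correct, self-contained proof that $a_0(\xi)\ge \kappa^{-(n+1)}a_0(\xi')>0$, which also shows the pole order is exactly $n+1$. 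This is a worthwhile supplement to, rather than a departure from, the paper's argument.
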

\begin{proof}
	As above, with a basis of $\mathfrak{t}\cong\mathbb{R}^{s}$
	fixed, write $\xi
= (\xi_{1}, \dots,\xi_{s})$ for an element of $\mt$.  By
Proposition \ref{HS shape prop}, the Hilbert series of the grading
induced by $W$ is given by
\begin{equation*}
HS_{W}(e^{-t_{1}}, \dots, e^{-t_{s}}) =\frac{e^{-t_{1}\alpha_{1}}\cdots
e^{-t_{s}\alpha_{s}} \cdot
HN(e^{-t_{1}},\dots,e^{-t_{s}})}{\prod_{j=1}^{N}(1-e^{-t_{1}w_{1j}}\cdots
e^{-t_{s}w_{sj}})}
\end{equation*}
where $\alpha_{i} \geq 0$ for every $i$.  By Lemma \ref{HS grading change}, we obtain
\begin{equation*}
F(\xi,t) = HS_{\xi^{T}\cdot W}(e^{-t}) =
\frac{e^{-t(\xi_{1}\alpha_{1}+\cdots +\xi_{s}\alpha_{s})} \cdot
HN(e^{-t\xi_{1}},\dots,e^{-t\xi_{s}})}{\prod_{j=1}^{N}
(1-e^{-t(\xi_{1}w_{1j}+\cdots+\xi_{s}w_{sj})})}
\end{equation*}
From this formula it follows that $F(\xi,t)$ is a meromorphic function
with coefficients depending smoothly on the Reeb field. More precisely 
for fixed $\xi\in\mathcal{C}_R$ there are no poles other than the origin
in the ball where
\[ |t| < \frac{2\pi}{\max_j\{ \xi_1w_{1j} + \ldots + \xi_s w_{sj}\}},\]
so we can compute the coefficients of the Laurent series using the
Cauchy integral formula on a small circle around the origin. As long as
$\xi$ varies in a bounded subset of $\mathcal{C}_R$, we can use the same
circle around the origin, and the coefficients will vary smoothly with
$\xi$. It
follows also that the order of the pole at $t=0$ is determined by the
order of the pole for rational $\xi$, which is $n+1$ by
Proposition~\ref{ind char RR}. 
Note that the coefficients blow up at the boundary of the
Reeb cone, since as $\xi$ approaches the boundary, there will be a $j$
such that $\xi_1w_{1j} + \ldots + \xi_s w_{sj}\to 0$.
\end{proof}

In some special cases, we can recover this result by computing the index
character explicitly.  For example, we have

\begin{prop}\label{ci}
Let $Y$ be a complete intersection, determined by the regular sequence
$f_{1}=\dots=f_{k}=0$.  Let $\alpha_{i}$ be the weight of the generators
$x_{i}$, and let $\beta_{j}$ be the weight of $f_{j}$.  Then we have
\begin{equation*}
F(\xi,t) = \frac{\prod_{j=1}^{k}
(1-e^{-t\beta_{j}(\xi)})}{\prod_{i=0}^{N}(1-e^{-t\alpha_{i}(\xi)})}
\end{equation*}
\end{prop}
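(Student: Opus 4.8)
The plan is to exploit the Koszul resolution of $R/I$, which is available precisely because $f_{1},\dots,f_{k}$ form a regular sequence. Recall from the discussion preceding the proposition that $F(\xi,t) = HS_{R/I}(e^{-t})$, where the Hilbert series is taken with respect to the grading on $R$ induced by $\xi$, under which the generator $x_{i}$ has weight $\alpha_{i}(\xi)$ and $f_{j}$ has weight $\beta_{j}(\xi)$. Since all these weights are positive when $\xi$ is a Reeb field, each graded piece is finite-dimensional and the Hilbert series is well-defined.

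First I would compute the Hilbert series of the ambient polynomial ring $R = \C[x_{0},\dots,x_{N}]$. Since the monomials in $R$ factor uniquely as products of powers of the $x_{i}$, the Hilbert series splits as a product of geometric series, one for each variable:
\[ HS_{R}(e^{-t}) = \prod_{i=0}^{N} \frac{1}{1-e^{-t\alpha_{i}(\xi)}}. \]

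Next, since $(f_{1},\dots,f_{k})$ is a regular sequence, the Koszul complex $K_{\bullet}$ on $f_{1},\dots,f_{k}$ is an exact, graded free resolution of $R/I$. Writing $F = \bigoplus_{j} R(-\beta_{j})$, the $p$-th term is $K_{p} = \Lambda^{p} F$, which, after recording the grading shifts, is a direct sum over subsets $S \subset \{1,\dots,k\}$ with $|S|=p$ of copies of $R$ generated in degree $\sum_{j\in S}\beta_{j}(\xi)$. Hence
\[ HS_{K_{p}}(e^{-t}) = HS_{R}(e^{-t}) \sum_{|S|=p} e^{-t\sum_{j\in S}\beta_{j}(\xi)}. \]

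Finally I would invoke additivity of the Hilbert function along the exact Koszul complex: taking dimensions of graded pieces and using the vanishing of the Euler characteristic on each piece gives $HS_{R/I}(e^{-t}) = \sum_{p=0}^{k}(-1)^{p} HS_{K_{p}}(e^{-t})$. Factoring out $HS_{R}(e^{-t})$ and recognizing the remaining alternating sum over subsets as the expansion of a product, this becomes
\[ HS_{R}(e^{-t}) \prod_{j=1}^{k}\left(1-e^{-t\beta_{j}(\xi)}\right), \]
which is exactly the asserted formula. The only step requiring genuine care is the exactness of the Koszul complex, which is where the complete intersection hypothesis enters; the grading bookkeeping placing each Koszul generator in the correct degree is routine once the shifts by $\beta_{j}(\xi)$ are tracked.
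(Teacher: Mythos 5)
Your proof is correct and is exactly the argument the paper intends: the paper's own proof reads simply ``Use the degree shifted Koszul complex resolution of $R/I$, and compute the Hilbert series,'' and your write-up fills in precisely those steps (exactness from the regular sequence hypothesis, the degree shifts by $\beta_{j}(\xi)$, and the alternating-sum/Euler-characteristic computation). Nothing is missing; you have just made explicit what the paper leaves as routine.
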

\begin{proof}
Use the degree shifted Koszul complex resolution of $R/I$, and compute
the Hilbert series.
\end{proof}

In order to define the Futaki invariant, we need equivariant versions
of the index character, taking into account an extra
$\mathbb{C}^*$-action. 
\begin{defn}
	In the situation of Theorem~\ref{ind char exp} with
	$\xi\in\mathcal{C}_R$, suppose $\eta \in \mt$,
and define the weight characters 
\begin{equation*}
	\begin{aligned}
C_\eta(\xi,t) &=
\sum_{\alpha\in\mathfrak{t}^*}e^{-t\alpha(\xi)}\alpha(\eta),\\
C_{\eta^2}(\xi,t) &= \sum_{\alpha\in\mathfrak{t}^*} e^{-t\alpha(\xi)}
(\alpha(\eta))^2.
\end{aligned}
\end{equation*}
\end{defn}
The convergence of these weight character follows from the arguments in
Lemma~\ref{ind char convg}.   As before, when $\xi$ is rational, 
we obtain a line bundle $L$ over the orbifold
$X=Y/\C^*$ with a $\C^*$-action on $L$ generated by
$\eta$.  Adapting the computations preceeding Proposition~\ref{ind char
RR} proves the following. 
\begin{prop}\label{weight char RR}
In the situation of Theorem~\ref{ind char exp}, with $\xi$ rational,
write $A_k$ for the infinitesimal action of $\eta$ on
$H^0(X, L^k)$, and define $b_0,b_1,c_0$ by the expansions
\begin{equation*}
	\begin{aligned}
\mathrm{Tr}(A_k) &= b_0k^{n+1} + (b_1 + \rho)
k^{n} + O(k^{n-1}),\\
\mathrm{Tr}(A_k^2) &= c_0k^{n+2} + O(k^{n+1}),
\end{aligned}
\end{equation*}
where $\rho$ is a periodic function with average zero, and $c_{0} \geq 0$.  Then the weight
characters have the asymptotic expansions
\begin{equation*}
	\begin{aligned}
	C_{\eta}(\xi,t ) &= \frac{b_0(n+1)!}{t^{n+2}} + \frac{b_1n!}{t^{n+1}}+
	O(t^{-n}),\\
	C_{\eta^2}(\xi,t) &= \frac{c_0(n+2)!}{t^{n+3}} + O(t^{-n-2}). 
\end{aligned}
\end{equation*}
\end{prop}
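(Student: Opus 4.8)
The plan is to reduce both weight characters to one-variable sums in the $\xi$-level $k$ and then to apply verbatim the summation argument used in the proof of Proposition~\ref{ind char RR}. First I would record the trace identity. Since $\xi$ is rational (and, after rescaling, normalized so that $\alpha(\xi)\in\mathbb{Z}$ for every non-empty weight space), the $\xi$-grading groups the weight spaces into the graded pieces $H^0(X,L^k)=\bigoplus_{\alpha:\,\alpha(\xi)=k}\mathcal{H}_\alpha$. Because $\eta\in\mt$ acts on $\mathcal{H}_\alpha$ by the scalar $\alpha(\eta)$, the operator $A_k$ is diagonal on $H^0(X,L^k)$ with eigenvalue $\alpha(\eta)$ on $\mathcal{H}_\alpha$, so that
\begin{equation*}
\mathrm{Tr}(A_k)=\sum_{\alpha:\,\alpha(\xi)=k}\alpha(\eta)\dim\mathcal{H}_\alpha,\qquad
\mathrm{Tr}(A_k^2)=\sum_{\alpha:\,\alpha(\xi)=k}(\alpha(\eta))^2\dim\mathcal{H}_\alpha.
\end{equation*}
Regrouping the defining sums for the weight characters by the value $k=\alpha(\xi)$ then gives
\begin{equation*}
C_\eta(\xi,t)=\sum_{k\geq 0} e^{-tk}\mathrm{Tr}(A_k),\qquad
C_{\eta^2}(\xi,t)=\sum_{k\geq 0} e^{-tk}\mathrm{Tr}(A_k^2),
\end{equation*}
the rearrangement being justified by the absolute convergence established in Lemma~\ref{ind char convg}.

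Next I would substitute the defining expansions of $\mathrm{Tr}(A_k)$ and $\mathrm{Tr}(A_k^2)$ and extract the principal parts. Recall from the proof of Proposition~\ref{ind char RR} the two facts that do all the work: $\sum_{k} e^{-tk}k^m=\frac{m!}{t^{m+1}}+(\text{analytic near }0)$, obtained by differentiating the geometric series $m$ times; and $\sum_k \rho(k)e^{-tk}$ is analytic near $t=0$ whenever $\rho$ is periodic with average zero, whence $\sum_k \rho(k)k^m e^{-tk}$ is analytic as well by differentiating that identity. Plugging $\mathrm{Tr}(A_k)=b_0k^{n+1}+(b_1+\rho)k^n+O(k^{n-1})$ into the first sum, the $b_0k^{n+1}$ term produces $b_0(n+1)!/t^{n+2}$, the $b_1 k^n$ term produces $b_1 n!/t^{n+1}$, the periodic contribution $\rho k^n$ is analytic, and the $O(k^{n-1})$ remainder produces a pole of order at most $n+1$, i.e. a term of size $O(t^{-n})$. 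This is exactly the claimed expansion for $C_\eta$.

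The same computation applied to $\mathrm{Tr}(A_k^2)=c_0k^{n+2}+O(k^{n+1})$ gives the leading term $c_0(n+2)!/t^{n+3}$ with remainder $O(t^{-n-2})$, where I note that any average-zero periodic piece hidden in the $O(k^{n+1})$ term again contributes analytically while its non-periodic part contributes at most a pole of order $n+2$. The argument is almost entirely bookkeeping; the only genuine point is the first step, namely the identification of the $\xi$-level sums with the equivariant traces $\mathrm{Tr}(A_k)$ and $\mathrm{Tr}(A_k^2)$, which rests on $\eta$ acting diagonally with weight $\alpha(\eta)$ on $\mathcal{H}_\alpha$ together with the orbifold description $H^0(X,L^k)=\bigoplus_{\alpha(\xi)=k}\mathcal{H}_\alpha$. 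Once this identification and the two summation facts are in hand the principal parts follow at once; the existence of the Riemann--Roch expansions for $\mathrm{Tr}(A_k)$ and $\mathrm{Tr}(A_k^2)$ is part of the hypotheses, furnished by equivariant orbifold Riemann--Roch (cf.\ \cite{Kaw,Toe}), so no further analytic input is required.
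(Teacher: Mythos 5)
Your proof is correct and is essentially the paper's own argument: the paper disposes of this proposition with the remark that ``adapting the computations preceding Proposition~\ref{ind char RR}'' proves it, and your write-up is exactly that adaptation — identifying $\mathrm{Tr}(A_k)$ and $\mathrm{Tr}(A_k^2)$ with the $\xi$-level sums of $\alpha(\eta)\dim\mathcal{H}_\alpha$ and $\alpha(\eta)^2\dim\mathcal{H}_\alpha$, regrouping the weight characters as $\sum_k e^{-tk}\mathrm{Tr}(A_k)$, and reusing the two summation facts from the proof of Proposition~\ref{ind char RR}. The only slip is cosmetic: the $O(k^{n-1})$ remainder produces a pole of order at most $n$ (not $n+1$), which is consistent with the $O(t^{-n})$ bound you state.
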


We remark that the inequality $c_{0} \geq 0$ follows from equation (2.20) in \cite{RT}, orThe results of Theorem~\ref{ind char exp} can also be extended quite
easily. 

\begin{thm}\label{equivariant RR}
In the situation of Theorem~\ref{ind char exp}, with $\eta \in
\mathfrak{t}$, the weight characters
admit meromorphic expansions to a small neighbourhood of $0 \in
\mathbb{C}$ of the form
\begin{equation*}
	\begin{aligned}
C_{\eta}(\xi,t) &= \frac{b_{0}(\xi)(n+1)!}{t^{n+2}} +
\frac{b_{1}(\xi)n!}{t^{n+1}}+O(t^{-n}) \\
C_{\eta^2}(\xi,t) &= \frac{c_0(\xi)(n+2)!}{t^{n+3}} + O(t^{-n-2}),
\end{aligned}
\end{equation*}
where $b_{0}, b_{1}, c_0$ depend smoothly on $\xi \in \mathcal{C}_{R}$.
Moreover, we have
\begin{equation*}
	\begin{aligned}
b_{i}(\xi) &= \frac{-1}{(n+1-i)}D_{\eta}a_{i}(\xi) \quad \text{ for } i=0,1 \\
c_0(\xi) &=\frac{1}{(n+2)(n+1)} D_{\eta}^2a_0(\xi),
\end{aligned}
\end{equation*}
where $D_{\eta}$ denotes the directional derivative along $\eta$ in
$\mathbb{R}^{s} \cong \mt$.
\end{thm}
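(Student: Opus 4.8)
The plan is to obtain both the meromorphic expansions and the formulas relating $b_i, c_0$ to derivatives of the $a_i$ by differentiating the index character $F(\xi,t)$ with respect to $\xi$ in the direction $\eta$. The key observation is the elementary identity
\begin{equation*}
D_\eta\big(e^{-t\alpha(\xi)}\big) = -t\,\alpha(\eta)\,e^{-t\alpha(\xi)},
\end{equation*}
so that differentiating the defining sum \eqref{indchar} term by term gives $D_\eta F(\xi,t) = -t\,C_\eta(\xi,t)$, and differentiating a second time gives $D_\eta^2 F(\xi,t) = t^2\,C_{\eta^2}(\xi,t)$. The first thing I would do is justify that this term-by-term differentiation is legitimate: the estimates in the proof of Lemma~\ref{ind char convg}, namely $\dim\mathcal{H}_\alpha < C|\alpha|^N$ together with $\alpha(\xi)>c|\alpha|$, show that the differentiated series converge locally uniformly in $\xi$ and $t$ (the extra factors of $\alpha(\eta)$ and $\alpha(\eta)^2$ are dominated by powers of $|\alpha|$, which are absorbed by the exponential decay). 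Hence $C_\eta$ and $C_{\eta^2}$ really are $\pm$ derivatives of $F$ times powers of $t$.

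Next I would invoke Theorem~\ref{ind char exp}, which guarantees that $F(\xi,t)$ extends meromorphically to a neighborhood of $t=0$ with the Laurent expansion $F(\xi,t) = a_0(\xi)n!\,t^{-(n+1)} + a_1(\xi)(n-1)!\,t^{-n} + \cdots$, the coefficients depending smoothly on $\xi$. Because that theorem produces the coefficients via a Cauchy integral over a fixed small circle on which $F$ depends smoothly on $\xi$ (uniformly on bounded subsets of $\mathcal{C}_R$), I can differentiate under the contour integral in the $\eta$ direction. This simultaneously shows that $D_\eta F$ and $D_\eta^2 F$ are meromorphic near $0$ with smoothly varying coefficients, and that their Laurent coefficients are precisely the $\eta$-derivatives of the Laurent coefficients of $F$. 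Dividing by $-t$ and $t^2$ respectively then yields the claimed meromorphic expansions for $C_\eta$ and $C_{\eta^2}$, with the orders of the poles shifted up by one and two as stated.

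It remains to read off the coefficient formulas by matching powers of $t$. From $C_\eta = -t^{-1}D_\eta F$ we get
\begin{equation*}
C_\eta(\xi,t) = -\frac{1}{t}\left( \frac{D_\eta a_0(\xi)\,n!}{t^{n+1}} + \frac{D_\eta a_1(\xi)\,(n-1)!}{t^{n}} + \cdots\right),
\end{equation*}
and comparing with the target expansion $C_\eta = b_0(\xi)(n+1)!\,t^{-(n+2)} + b_1(\xi)n!\,t^{-(n+1)} + \cdots$ forces $b_0(n+1)! = -D_\eta a_0\,n!$ and $b_1\,n! = -D_\eta a_1\,(n-1)!$, i.e. $b_i = \tfrac{-1}{n+1-i}D_\eta a_i$ for $i=0,1$. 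Similarly $C_{\eta^2} = t^{-2}D_\eta^2 F$ gives leading term $D_\eta^2 a_0\,n!\,t^{-(n+3)}$, and matching against $c_0(n+2)!\,t^{-(n+3)}$ yields $c_0 = \tfrac{1}{(n+2)(n+1)}D_\eta^2 a_0$. The smoothness of $b_i, c_0$ in $\xi$ is inherited from the smoothness of the $a_i$ already established in Theorem~\ref{ind char exp}. I expect the main obstacle to be the justification of differentiating under the sum and under the Cauchy contour simultaneously; once the uniform convergence estimate from Lemma~\ref{ind char convg} is upgraded to control the $\eta$-differentiated series on a neighborhood of the contour, the rest is bookkeeping of Laurent coefficients.
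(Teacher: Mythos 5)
Your proposal is correct and takes essentially the same approach as the paper: the paper obtains the identity $tC_{\eta}(\xi,t) = \frac{\partial}{\partial s}F(\xi-s\eta,t)\big|_{s=0}$ (your relation $D_\eta F = -tC_\eta$ in a different notation), then differentiates the Laurent expansion from Theorem~\ref{ind char exp} term by term using the smooth dependence of the $a_i$ on $\xi$, and matches powers of $t$ exactly as you do. Your additional justification via locally uniform convergence and differentiation under the Cauchy contour integral simply spells out what the paper leaves implicit.
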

\begin{proof}
We define
\begin{equation*}
G(\xi,s, t) = \sum_{\alpha \in \mathfrak{t}^{*}}e^{-t\alpha(\xi-s\eta)}\dim H_{\alpha}.
\end{equation*}
For $s$ sufficiently small, $\xi-s\eta$ is a Reeb vector field and so
the defining sum for  $G(\xi,s,t)$ converges uniformly for $t>0$, and we
have $G(\xi,s,t) = F(\xi-s\eta, t)$.  It is clear that
\begin{equation*}
tC_{\eta}(\xi,t) = \frac{\partial}{\partial s} G(\xi,s,t)\bigg|_{s=0}=
\frac{\partial}{\partial s} \left(\frac{a_{0}(\xi-s\eta)n!}{t^{n+1}} +
\frac{a_{1}(\xi-s\eta)(n-1)!}{t^{n}}+\cdots \right)\bigg|_{s=0}.
\end{equation*} 
By Theorem~\ref{ind char exp} the coefficients $a_{0}, a_{1}, \dots$
depend smoothly on the Reeb field and so we can differentiate term by
term to obtain
\begin{equation*}
C_{\eta}(\xi,t) = \frac{b_{0}(\xi)(n+1)!}{t^{n+2}} + \frac{b_{1}(\xi)n!}{t^{n+1}}+\cdots,
\end{equation*}
where, for example,  $b_{0}(\xi) = \frac{-1}{n+1}D_{\eta}a_{0}(\xi)$ and $D_{\eta}$
denotes the directional derivative along $\eta$. The argument for
$C_{\eta^2}$ is identical.
\end{proof}

\section{Test Configurations for Polarized Affine Varieties}
Our first task is to define a test configuration for an affine variety
$Y$ polarized by a Reeb field $\xi$. Recall that we can assume that
$Y\subset\mathbb{C}^N$ is invariant under the linear action of a torus $T$
and the Reeb field $\xi$ is in the Lie algebra $\mt$ of the maximal compact
subtorus. Let $\mathcal{H}$ be the coordinate ring of $Y$. 
\begin{defn}\label{test config}
	A $T$-equivariant test-configuration for $Y$ consists of the
	following data. 
	\begin{enumerate}
		\item A set of $T$-homogeneous elements
			$\{f_1,\ldots,f_k\}\in\mathcal{H}$, which
			generate $\mathcal{H}$ in sufficiently high
			degrees. 
		\item Integers $w_i$ for $i=1,\ldots,k$. 
	\end{enumerate}
\end{defn}

This corresponds to the usual, more geometric definition of
test-configu\-ra\-tions. Namely we can embed $Y$ into $\mathbb{C}^k$ using the
functions $\{f_1,\ldots,f_k\}$, and then act on $\mathbb{C}^k$ by the
$\mathbb{C}^*$-action with weights $w_i$. Taking the flat limit across $0$
of the $\mathbb{C}^*$-orbit of $Y$ we obtain a flat family of affine
schemes over $\mathbb{C}$. It is in this form that we will construct our
test configurations in Section~\ref{sec:appl}.
The central fiber $Y_0$ still has an action of
$T$ as well as a new $\mathbb{C}^*$-action commuting with $T$ (if we have a
product configuration, then this new $\mathbb{C}^*$ is actually a subgroup
of $T$).  Note that 
when $\xi$ is rational, then we can take $T$ to be the
1-dimensional torus generated by $\xi$, and a test-configuration for $Y$ is
the same as a test-configuration for the quotient orbifold as we defined it
in Section~\ref{sec:orb}.

It is important to note that as a $T$-representation, the ring of functions
on the central fiber
$Y_0$ is isomorphic to $\mathcal{H}$, it is only the multiplicative
structure that changes. In particular if $\xi\in\mt$ is a Reeb field on $Y$,
then it is also a Reeb field on $Y_0$. We can therefore apply our results
on the index character to $Y_0$. 
By Theorem~\ref{ind char exp}, the index character expands
asymptotically as
\begin{equation*}
F(\xi, t) = \frac{a_{0}(\xi)n!}{t^{n+1}} + \frac{a_{1}(\xi)(n-1)!}{t^{n}} + O(t^{1-n})
\end{equation*}
where $a_{0}, a_{1} :\mathcal{C}_{R} \rightarrow \mathbb{R}$ are smooth
functions.  Moreover, $Y$ inherits an extra $\C^{*}$ action generated by
$\eta \in \mt'=\text{Lie}(T'_\mathbb{R})$ for some torus 
$T' \subset GL(N,\C)$ with
$T \subset T'$.  By Theorem~\ref{equivariant RR}, the weight characters
expand as 
\begin{equation*}
\begin{aligned}
C_{\eta}(\xi, t) &= \frac{b_{0}(\xi)(n+1)!}{t^{n+2}} +
\frac{b_{1}(\xi)n!}{t^{n+1}} + O(t^{-n})\\
C_{\eta^{2}}(\xi, t) &= \frac{c_{0}(\xi)(n+2)!}{t^{n+3}} + O(t^{2-n})
\end{aligned}
\end{equation*}
where $b_{0}, b_{1}, c_{0} :\mathcal{C}_{R} \rightarrow \mathbb{R}$ are
smooth functions, and $c_{0}\geq0$.  

\begin{defn}
In the above situation, we define the Donaldson-Futaki invariant of the
test configuration, with respect to the Reeb field $\xi$, by 
\begin{equation}\label{DF invariant}
	Fut(Y_{0},\xi,\eta) := \frac{a_{1}(\xi)}{a_0(\xi)}b_{0}(\xi) -
b_{1}(\xi) =
\frac{a_0(\xi)}{n}D_{\eta}(\frac{a_{1}}{a_{0}})(\xi) +
\frac{a_{1}(\xi)D_{\eta}a_{0}(\xi)}{n(n+1)a_{0}(\xi)},
\end{equation}
where the second equality follows from Theorem~\ref{equivariant RR}. We
also define the norm of $\eta$, with respect to the Reeb field $\xi$ by
\[ \Vert\eta\Vert^2_\xi = c_0(\xi) - \frac{b_0(\xi)^2}{a_0(\xi)}. \] 
\end{defn}

Propositions~\ref{ind char RR} and~\ref{weight char RR} show that the
above definition of the Donaldson-Futaki invariant extends Ross-Thomas's
orbifold Donaldson-Futaki invariant to irrational Reeb vector fields.

\begin{defn}
We say that $(Y,\xi)$ is K-semistable if, for every torus $T \ni \xi$,
and every $T$-equivariant  test configuration with central fibre $Y_{0}$, we have
\begin{equation*}
Fut(Y_{0},\xi, \eta) \geq 0
\end{equation*}
where $\eta \in T'$ is the induced $\C^{*}$ action on the central fibre.  
\end{defn}

K-stability could also be defined along similar lines. Since there is
usually a positive dimensional torus of automorphisms, one natural way
would be to use the notion of relative stability following \cite{Sz1}. This 
would also
allow us to consider the analogs of extremal metrics (called canonical
Sasakian metrics in \cite{BGS}). Since we do not use these notions in this
paper, we will not define them. 
We are now in a position to prove our main theorem;

\begin{thm}\label{Don thm}
Let $(Y,\xi)$ be a polarized affine variety of dimension $n+1$ with a torus
of automorphisms $T$, containing the Reeb field. Suppose
that we have a $T$-equivariant test-configuration for $Y$ and let
$Y_0$ be the central fibre with induced $\C^{*}$-action $\eta$.  
For any K\"ahler metric $\Omega$ on $Y$ compatible with $\xi$,
\begin{equation}\label{Cal bound}
\Vert\eta\Vert_\xi\cdot Cal_{Y}(\Omega) \geq -c(n)(\xi) Fut(Y_{0},\xi,\eta),
\end{equation}
where $c(n)$ is a strictly positive constant depending only on $n$.
\end{thm}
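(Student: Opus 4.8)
The plan is to reduce the irrational case to the rational (orbifold) case already handled by Donaldson--Ross--Thomas, using the smooth dependence of all the relevant coefficients on $\xi$ established in Theorems~\ref{ind char exp} and~\ref{equivariant RR}, together with the continuity of the Calabi functional in $\xi$ from Section~\ref{sec:cal}. First I would treat the case where $\xi$ is a \emph{rational} Reeb field. In that situation $Y$ is the total space of a line bundle $L^{-1}$ over the quotient orbifold $X = Y/\mathbb{C}^*$, the test-configuration data $(\{f_i\}, w_i)$ of Definition~\ref{test config} induces an orbifold test-configuration $\chi$ for $(X,L)$ in the sense of Section~\ref{sec:orb}, and by Propositions~\ref{ind char RR} and~\ref{weight char RR} the coefficients $a_0(\xi), a_1(\xi), b_0(\xi), b_1(\xi), c_0(\xi)$ read off from the index and weight characters coincide with the Riemann--Roch coefficients appearing in the orbifold Futaki invariant and norm. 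Hence $Fut(Y_0,\xi,\eta)$ equals $Fut(\chi)$ and $\Vert\eta\Vert_\xi$ equals $\Vert\chi\Vert$. The Donaldson--Ross--Thomas theorem quoted above then gives
\begin{equation*}
\Vert\chi\Vert\cdot\Vert R_\omega-\hat R\Vert_{L^2(\omega)} \geq -c(n)\,Fut(\chi),
\end{equation*}
and, after inserting the identification $Cal_X(\omega) = \Vert R_\omega-\hat R\Vert_{L^2(\omega)}$ together with the proportionality $Cal_Y(\Omega) = (2n+2)^{-1/2}\,Cal_X(\omega)$ from Proposition~3 of Section~\ref{sec:cal}, one obtains \eqref{Cal bound} for rational $\xi$, absorbing the explicit constant $(2n+2)^{1/2}$ into $c(n)$.

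Next I would pass to an arbitrary (possibly irrational) Reeb field $\xi\in\mathcal{C}_R$ by approximation. By Corollary~\ref{approx cor} there is a sequence $\xi_k\to\xi$ of rational Reeb fields with compatible metrics $\Omega_k$ converging to $\Omega$ smoothly on compact subsets of $Y$. The key point is that every quantity entering \eqref{Cal bound} is continuous in $\xi$: the coefficients $a_0,a_1,b_0,b_1,c_0$ depend smoothly on $\xi$ by Theorems~\ref{ind char exp} and~\ref{equivariant RR}, so $Fut(Y_0,\xi_k,\eta)\to Fut(Y_0,\xi,\eta)$ and $\Vert\eta\Vert_{\xi_k}\to\Vert\eta\Vert_\xi$, while the smooth convergence $\Omega_k\to\Omega$ gives $Cal_Y(\Omega_k)\to Cal_Y(\Omega)$. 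Applying the rational case to each $(\xi_k,\Omega_k)$ and letting $k\to\infty$ yields \eqref{Cal bound} for $\xi$, with the same dimensional constant $c(n)$.

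The main obstacle is ensuring the approximation step is genuinely uniform, i.e. that the constant $c(n)$ does not degenerate and that the limits are truly controlled. Two subtleties deserve care. First, one must confirm that the test-configuration data of Definition~\ref{test config} define, for \emph{each} rational $\xi_k$, a genuine orbifold test-configuration in the sense required by Ross--Thomas; since the $T$-representation on the central fibre $Y_0$ is independent of the grading and only the multiplicative structure enters, the same generators $\{f_i\}$ and weights $w_i$ work simultaneously for all $\xi_k$, so this is a matter of checking flatness and positivity of $L$, both of which are open conditions satisfied for $\xi_k$ near $\xi$. Second, one must verify that $Cal_Y(\Omega_k)$ converges despite the integration region $\{r\leq 1\}$ and the measure $\Omega_k^{n+1}$ varying with $k$; because the convergence $\Omega_k\to\Omega$ is smooth on compact subsets and the integrand is controlled near the apex, this is a routine dominated-convergence argument, but it is the place where one genuinely uses that $\xi$ lies in the interior of the Reeb cone (so that the coefficients stay bounded away from the boundary degeneration noted at the end of Theorem~\ref{ind char exp}). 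Once these continuity and uniformity points are in place, the theorem follows.
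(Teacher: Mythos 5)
Your strategy coincides with the paper's: prove \eqref{Cal bound} for rational Reeb fields by identifying the index-character data with Ross--Thomas's orbifold data and quoting the Donaldson--Ross--Thomas lower bound, then handle irrational $\xi$ via Corollary~\ref{approx cor} together with the smooth dependence of $a_i,b_i,c_0$ on $\xi$ from Theorems~\ref{ind char exp} and~\ref{equivariant RR}. However, your rational case has a genuine gap, and it is exactly the step the approximation argument cannot do without. The identifications you assert --- $Fut(Y_0,\xi,\eta)=Fut(\chi)$, $\Vert\eta\Vert_\xi=\Vert\chi\Vert$, and the proportionality of $Cal_Y(\Omega)$ with $Cal_X(\omega)$ --- are valid only when $\xi$ is rational \emph{and minimal}, i.e.\ normalized so that $\alpha(\xi)$ is integral for every nonzero weight space; this is the standing hypothesis of Propositions~\ref{ind char RR} and~\ref{weight char RR}, on which your identification rests. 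For a general rational $\xi$ the quantities in \eqref{Cal bound} are computed from the index character at $\xi$ itself, and from $F(\lambda\xi,t)=F(\xi,\lambda t)$ one gets $a_i(\lambda\xi)=\lambda^{-(n+1-i)}a_i(\xi)$, $b_i(\lambda\xi)=\lambda^{-(n+2-i)}b_i(\xi)$, $c_0(\lambda\xi)=\lambda^{-(n+3)}c_0(\xi)$; hence $Fut$ scales by $\lambda^{-(n+1)}$ and $\Vert\eta\Vert_\xi$ by $\lambda^{-(n+3)/2}$, so your claimed equalities are simply false for non-minimal rational fields.

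This matters because the approximating fields $\xi_k$ of Corollary~\ref{approx cor} are essentially never minimal: since $\xi$ is irrational, the factors $\lambda_k$ rescaling $\xi_k$ to minimal integral vectors are necessarily unbounded as $k\to\infty$ (minimal integral vectors form a discrete set), so one cannot renormalize the sequence and keep $\xi_k\to\xi$. Thus ``applying the rational case to each $(\xi_k,\Omega_k)$'' applies a statement you have proved only for minimal fields to fields that are not minimal. The paper closes precisely this gap using the scaling law \eqref{Cal scale}: replacing $\xi$ by $\lambda^{-1}\xi$ and $r$ by $r^{\lambda}$ multiplies $\Vert\eta\Vert_\xi$ by $\lambda^{(n+3)/2}$, $Cal_Y(\Omega)$ by $\lambda^{(n-1)/2}$, and $Fut(Y_0,\xi,\eta)$ by $\lambda^{n+1}$, so both sides of \eqref{Cal bound} scale by $\lambda^{n+1}$ and the inequality is invariant under rescaling the Reeb field; the minimal rational case therefore implies it for all rational fields. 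Once this scale-invariance is inserted, your limiting argument (which is otherwise fine --- your two ``subtleties'' are indeed routine checks) goes through and matches the paper's proof.
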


\begin{proof}
When $\xi$ is rational and minimal satisfying the condition that
$\alpha(\xi)$ is integral for each $\alpha \in \mt^{*}$ with non-empty
weight space,  this theorem is just a restatement of the results of
Donaldson \cite{D} and Ross-Thomas \cite{RT}.  However, from the
definitions of $a_{i}, b_{i}, c_{0}$ for $i=0,1$, and the scaling of the
Calabi functional in equation~(\ref{Cal scale}), the inequality is
invariant under scaling the Reeb field.  In particular, it holds for all
rational Reeb vector fields.  

Assume that $\xi$ is irrational.  According to Corollary~\ref{approx cor} we can
approximate $\xi$ with a sequence of rational Reeb fields $\xi_k\in\mt$,
and find corresponding compatible K\"ahler metrics $\Omega_k$, which
converge to $\Omega$ smoothly on compact sets. For the rational $\xi_k$ we
already know that 
\begin{equation*}
	\Vert\eta\Vert_{\xi_k}\cdot Cal_{Y}(\Omega_{k}) \geq -c(n)
Fut(Y_{0},\xi_{k}, \eta).
\end{equation*}
All the terms in this inequality depend smoothly on the Reeb vector field
by Theorems~\ref{ind char exp}
and~\ref{equivariant RR}, and since $\Omega_k\to\Omega$ smoothly on compact
sets. We can therefore take a limit as $k\to\infty$ to obtain the
inequality for the irrational Reeb field $\xi$. 
\end{proof}

Corollary~\ref{main cor}, stated in the introduction,
follows immediately from Theorem~\ref{Don thm}, since $c(n) > 0$.

\section{Applications and Examples}\label{sec:appl}

As an application of our techniques, we will show that the the volume
minimization results of \cite{MSY} and the Lichnerowicz obstruction of
\cite{GMSY} can be obtained directly from K-stability considerations as
obstructions to existence of Sasaki-Einstein metrics.  More precisely,
we will show that for Calabi-Yau cones with isolated Gorenstein
singularities, and a torus action inducing a Reeb vector field,
K-stability for product test configurations implies the volume
minimization results of \cite{MSY}.  Martelli, Sparks and Yau noticed
that when the Reeb field minimizing the volume functional was rational,
the Futaki invariant on the quotient orbifold vanished.  Secondly,  we
will apply the Rees deformation to interpret the Lichnerowicz
obstruction of \cite{GMSY} in terms of K-stability. For rational Reeb
vector fields, the Lichnerowicz obstruction was shown to imply the slope
instability, and hence K-instability, of the quotient orbifold in
\cite{RT}.  Our results recover this theorem, and extend it to the
setting of irrational Reeb fields.

Let $Y$ be an affine, Calabi-Yau variety with an isolated singularity at
$0$, and a torus $T$, acting holomorphically, and effectively on $Y$,
admitting a Reeb vector field $\xi \in \mt$.  We suppose that $0 \in Y$
is a Gorenstein singularity, by which we mean that the canonical bundle
is trivial on $X := Y-\{0\}$.  According to section 2.7 of \cite{MSY},
we fix a non-vanishing section $\Theta \in H^{0}(X, K_{X})$ which is
homogeneous of degree $n+1$ for the action of the Reeb field.  More
precisely, we fix a cross-section $\Sigma \subset \mathcal{C}_{R}$ so
that for each $\xi \in \Sigma$, we have $\mathcal{L}_{\xi}\Theta =
i(n+1)\Theta$.  According to \cite{MSY}, $\Sigma$ is a compact, convex
polytope.  By the computations in Section 3.1 of \cite{MSY}
\begin{equation*}
\int_{S}R(g_{S})d\mu =2n(2n+1)Vol(S).
\end{equation*}
Assuming for the moment that $\xi \in \Sigma$ is rational, the orbifold
Riemann-Roch theorem implies that
\begin{equation*}
a_{1}(\xi) = \frac{1}{2}\int_{X}R_{\omega} \frac{\omega^{n}}{n!} = \frac{1}{16\pi
}\int_{S} R(g_{S})d\mu +\frac{2n}{16\pi }Vol(S).
\end{equation*}
This follows from a computation similar to the computation in section 4
for the Calabi functional, and the relation between the \emph{complex}
transverse scalar curvature of the Reeb foliation and the \emph{real}
scalar curvature of the Sasakian metric given by equation~(\ref{scalar
curvature relations}).  Moreover, we have
\begin{equation*}
a_{0}(\xi) = \frac{1}{2\pi}Vol(S),
\end{equation*}
which follows easily by a similar argument.  Since both of these
identities are continuous in the Reeb field, they extend from the
rational Reeb fields to all of $\Sigma$.  Thus, for $\xi \in \Sigma$ we
have
\begin{equation}\label{Gore relation}
a_{1}(\xi) = \frac{n(n+1)}{2}a_{0}(\xi).
\end{equation}
Consider now a product test configuration $Y \times \C$, with a $\C^{*}$
action generated by $\eta \in \mt$.  We assume additionally that  $\eta$
is tangent $\Sigma$.  Applying equation~(\ref{DF invariant}), the
Donaldson-Futaki invariant is given by
\begin{equation*}
 Fut(Y,\xi,\eta) = \frac{1}{2}D_{\eta}a_{0}(\xi).
 \end{equation*}
Since we could replace $\eta$ with $-\eta$, 
it follows from Theorem~\ref{Don thm}
that If $\xi$ is the Reeb vector field of a Sasaki-Einstein
metric, then we must have
\begin{equation*}
D_{\eta}a_{0}(\xi)=0
\end{equation*}
for every rational $\eta$, and hence $\xi$ must be a critical point of the volume
functional. 
Moreover, it was shown in \cite{MSY} that the volume functional of a
Sasakian
manifold is strictly convex when restricted to $\Sigma$, so a critical
point is necessarily a minimum. 
In particular, we have

\begin{thm}\label{V thm}
Let $(Y,\Theta)$ be an isolated Gorenstein singularity with link $L$,
and Reeb vector field $\xi$ satisfying $\mathcal{L}_{\xi}\Theta=
i(n+1)\Theta$.  If $\xi$ does not minimize the volume functional of the
link $L$, then $(Y,\xi)$ is K-unstable.
\end{thm}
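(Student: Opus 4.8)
The plan is to derive Theorem~\ref{V thm} as a direct consequence of the machinery already assembled, specifically Theorem~\ref{Don thm} applied to product test-configurations, together with the Gorenstein relation~\eqref{Gore relation}. The structure of the argument is essentially contained in the discussion immediately preceding the statement, so my task is to organize it cleanly and fill the logical gaps. First I would fix the cross-section $\Sigma\subset\mathcal{C}_R$ normalized so that $\mathcal{L}_\xi\Theta = i(n+1)\Theta$ for all $\xi\in\Sigma$, which is legitimate by Section 2.7 of \cite{MSY}. On this slice the key input is the identity $a_1(\xi) = \frac{n(n+1)}{2}a_0(\xi)$ from~\eqref{Gore relation}, valid first for rational $\xi\in\Sigma$ by orbifold Riemann-Roch and then on all of $\Sigma$ by the continuity (indeed smoothness) of $a_0,a_1$ in the Reeb field established in Theorem~\ref{ind char exp}.

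The heart of the computation is simplifying the Donaldson-Futaki invariant on $\Sigma$. I would take a vector $\eta\in\mt$ tangent to $\Sigma$, so that the $\mathbb{C}^*$-action it generates is compatible with the Gorenstein normalization, and form the \emph{product} test-configuration $Y\times\mathbb{C}$. Substituting~\eqref{Gore relation} into the formula~\eqref{DF invariant} for $Fut(Y,\xi,\eta)$ collapses the expression dramatically: since $a_1/a_0$ is the constant $\frac{n(n+1)}{2}$ along $\Sigma$, the directional derivative $D_\eta(a_1/a_0)$ vanishes, leaving
\begin{equation*}
Fut(Y,\xi,\eta) = \frac{a_1(\xi)}{n(n+1)a_0(\xi)}D_\eta a_0(\xi) = \frac{1}{2}D_\eta a_0(\xi).
\end{equation*}
This is exactly the clean formula quoted in the text. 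The one point requiring care here is confirming that $\eta$ being tangent to $\Sigma$ is precisely what guarantees $D_\eta(a_1/a_0)=0$; this is where the Gorenstein normalization of $\Sigma$ does its work, and I would state it explicitly rather than leave it implicit.

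With this formula in hand the conclusion is nearly immediate. Since $a_0(\xi) = \frac{1}{2\pi}\mathrm{vol}(S) > 0$ and $c(n)>0$, Theorem~\ref{Don thm} applied to both $\eta$ and $-\eta$ forces $Fut(Y,\xi,\eta)\geq 0$ and $Fut(Y,\xi,-\eta)\geq 0$ whenever $\xi$ admits a constant-scalar-curvature (in particular Sasaki-Einstein) metric; by linearity of $D_\eta a_0$ in $\eta$ this yields $D_\eta a_0(\xi)=0$ for every rational $\eta$ tangent to $\Sigma$, hence for all such $\eta$ by density. Therefore $\xi$ is a critical point of $a_0$, i.e.\ of the volume functional, on $\Sigma$. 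Taking the contrapositive: if $\xi$ fails to be critical, there is some $\eta$ with $D_\eta a_0(\xi)\neq 0$, so one sign of $\eta$ produces $Fut(Y,\xi,\eta)<0$, violating K-semistability and establishing K-instability. I would close by invoking the strict convexity of the volume functional on $\Sigma$ proved in \cite{MSY} to upgrade ``critical point'' to ``minimum,'' so that the failure to minimize is equivalent to the failure to be critical, giving the statement exactly as phrased. The main obstacle is not any single hard estimate but rather the bookkeeping that ties the purely algebraic $Fut$ to the volume: specifically, ensuring the normalizations of $\Sigma$, the factor $i(n+1)$, and the identifications $a_0 = \frac{1}{2\pi}\mathrm{vol}(S)$ are mutually consistent so that the constant $\frac{1}{2}$ emerges correctly.
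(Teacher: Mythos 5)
Your proposal is correct and follows essentially the same route as the paper: the Gorenstein relation $a_1(\xi) = \frac{n(n+1)}{2}a_0(\xi)$ on the slice $\Sigma$, the collapse of the Donaldson--Futaki invariant of a product test-configuration to $\frac{1}{2}D_\eta a_0(\xi)$ for $\eta$ tangent to $\Sigma$, the $\pm\eta$ sign argument, and the strict convexity of the volume functional from \cite{MSY} to identify critical points with minimizers. Your closing contrapositive (non-critical $\Rightarrow$ some product configuration has negative Futaki invariant $\Rightarrow$ K-unstable) is exactly the paper's argument, and you correctly observe that this part is purely algebraic, with Theorem~\ref{Don thm} only needed for the subsequent corollary about nonexistence of constant scalar curvature metrics.
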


From the argument it is clear that more generally
in any family of Reeb fields
$\xi$ for which the ratio $a_1/a_0$ is constant, a K-semistable Reeb field
must be a critical point of the volume $a_0$. 
In the case of Gorenstein singularities we 
obtain the following corollary, which was first pointed out in \cite{MSY}.

\begin{cor}
Let $(Y,\Theta)$ be an isolated Gorenstein singularity with link $L$,
and Reeb vector field $\xi$ satisfying $\mathcal{L}_{\xi}\Theta=
i(n+1)\Theta$.  If $\xi$ does not minimize the volume functional of the
link $L$, then $(Y,\xi)$ does not admit a compatible K\"ahler metric
with constant scalar curvature.  In particular, the link $L$ with Reeb
field $\xi$ does not admit a Sasaki-Einstein metric.
\end{cor}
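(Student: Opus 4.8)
The plan is to obtain this as a direct consequence of Theorem~\ref{V thm} together with the main inequality of the paper, reasoning by contradiction. Theorem~\ref{V thm} already shows that if $\xi$ fails to minimize the volume functional of $L$, then $(Y,\xi)$ is K-unstable; that is, there is a $T$-equivariant test-configuration with central fibre $Y_0$ and induced $\C^*$-action $\eta$ for which $Fut(Y_0,\xi,\eta) < 0$. The only remaining task is to translate this algebraic instability into the nonexistence of a compatible constant scalar curvature metric, which is precisely the content of Corollary~\ref{main cor}.

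First I would assume, for contradiction, that $(Y,\xi)$ admits a compatible K\"ahler metric $\Omega$ of constant scalar curvature, and then check that this forces $Cal_Y(\Omega)=0$. Indeed, from the computations of Section~\ref{sec:cal} we have $r^2 R_\Omega = \pi^*R_\omega - n$ and $\hat{R}_\Omega = \hat{R}_\omega - n$, so the integrand defining $Cal_Y(\Omega)$ equals $\pi^*R_\omega - \hat{R}_\omega$; by the scalar curvature relation~\eqref{scalar curvature relations} this vanishes exactly when the transverse scalar curvature is constant, which is the case for a constant scalar curvature Sasakian metric. Next I would apply the fundamental inequality~\eqref{Cal bound} of Theorem~\ref{Don thm} to the destabilizing test-configuration produced by Theorem~\ref{V thm}, obtaining
\begin{equation*}
0 = \Vert\eta\Vert_\xi\cdot Cal_Y(\Omega) \geq -c(n)\,Fut(Y_0,\xi,\eta).
\end{equation*}
Since $c(n)>0$ and $Fut(Y_0,\xi,\eta)<0$, the right-hand side is strictly positive, a contradiction; hence no such metric can exist.

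Finally, the ``in particular'' clause follows because a Sasaki-Einstein metric is a special case of a constant scalar curvature Sasakian metric: an Einstein metric has constant scalar curvature by the contracted Bianchi identity, and under the Gorenstein normalization $\mathcal{L}_\xi\Theta = i(n+1)\Theta$ a Sasaki-Einstein structure on $L$ corresponds to a Ricci-flat K\"ahler cone metric on $Y$ compatible with $\xi$. Thus the nonexistence of a compatible cscK metric immediately rules out a Sasaki-Einstein metric with Reeb field $\xi$. The only point requiring genuine care is the bookkeeping in the first step, namely verifying that the vanishing of $Cal_Y(\Omega)$ is equivalent to constancy of the transverse scalar curvature, using the identifications between $R_\Omega$, the orbifold scalar curvature $R_\omega$, and the Sasakian scalar curvature recorded in Section~\ref{sec:cal}. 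Beyond this, the statement is a formal combination of Theorems~\ref{V thm} and~\ref{Don thm}, so I do not anticipate any substantial obstacle.
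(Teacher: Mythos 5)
Your proposal is correct and follows essentially the same route as the paper, which states this corollary as an immediate consequence of Theorem~\ref{V thm} combined with Theorem~\ref{Don thm} (equivalently Corollary~\ref{main cor}): a constant scalar curvature compatible metric forces $Cal_Y(\Omega)=0$, hence K-semistability by the inequality~\eqref{Cal bound}, contradicting the K-instability furnished by Theorem~\ref{V thm}. Your extra bookkeeping (verifying $Cal_Y(\Omega)=0$ via the scalar curvature relations of Section~\ref{sec:cal}, and reducing the Sasaki-Einstein case to cscK) is exactly the content the paper leaves implicit.
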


Next, we aim to show how the Lichnerowicz obstruction of Gauntlett, Martelli, Sparks and Yau \cite{GMSY} can
be interpreted in terms of K-stability by computing explicitly the
Donaldson-Futaki invariant of a test configuration arising from  the Rees algebra for a principal ideal.  These test
configurations, which we call the Rees deformation, are a simplified
version of the deformation to the normal cone test configurations
studied by Ross-Thomas \cite{RT, RT1}. Let $R=
\C[x_{1},\dots,x_{N}]/(f_{1},\dots,f_{d})$, and $Y= \Spec R$ be an
affine variety with an effective, holomorphic action action of a torus
$T$, and let $V \subset Y$ be an invariant subscheme, corresponding to a
homogenous ideal $I \subset R$.  Suppose that $\xi \in \mt$ is a Reeb
vector field.  We consider the Rees algebra of $R$ with respect to $I$,
given by
\begin{equation}
\mathcal{R}=\mathcal{R}(R,I) := \bigoplus_{n\in\mathbb{Z}}t^{-n}I^{n} =
R[t,t^{-1}I] \subset R[t,t^{-1}]
\end{equation}
where $I^{n} := R$ for $n \leq 0$.  For ease of notation we set
$\mathcal{Y} = \Spec \mathcal{R}$.  Note that $\mathcal{Y}$ admits a
$\C^{*}$ action induced by $\lambda \cdot t = \lambda^{-1} t $ for $\lambda
\in \C^{*}$.  The canonical inclusion $\C[t] \hookrightarrow
\mathcal{R}$ gives a map $\pi:\mathcal{Y} \rightarrow \C$, and this map
is clearly $\C^{*}$ equivariant with respect to the above action.  The
scheme $\mathcal{Y}$ carries a natural action of $T$ by acting on the
$t$-graded components, and hence commuting with the $\C^{*}$ action.
For $\alpha \in \C-\{0\}$, the fibre $\pi^{-1}(\alpha) \cong Y$, as
$\mathcal{R}/(T-\alpha)\mathcal{R} \cong R$, and so the generic fibre is
isomorphic to $Y$.  The $T$ action on $\mathcal{Y}$ clearly preserves
the fibres, and restricts to the action of $T$ on $Y$ away from the
central fibre.  Moreover, we have
\begin{equation*}
\mathcal{Y}_{0} := \pi^{-1}(0) = \Spec \bigoplus_{n\geq0}I^{n}/I^{n+1},
\end{equation*}
and so the central fibre is precisely the normal cone of $V$ in $Y$.
The $\C^{*}$ action on the central fibre is determined by the grading
giving $I^{n}/I^{n+1}$ degree $n$.  Moreover, if $\xi \in \mt$ is the
Reeb field, then $\xi$ induces a Reeb field on $\mathcal{Y}_{0}$.  To
see this, observe that if $\xi$ induces a positive grading on $R$, and
$I\subset R$ is homogeneous, then $\xi$ also induces a positive grading
on
\begin{equation*}
R/I \oplus I/I^{2}\oplus \cdots \oplus I^{n}/I^{n+1} \oplus \cdots
\end{equation*}
Finally, it is well known that $\mathcal{R}(R,I)$ is flat over $\C[t]$;
see for instance \cite{Eis}. 

In order to obtain the Lichnerowicz obstruction, we consider the simplest family Rees deformations; namely, those obtained from principal ideals.
Fix a holomorphic function $f:Y\rightarrow \mathbb{C}$, which is
homogeneous for the torus action.  We denote by $\alpha \in \mt^{*}$ the
weight of $f$ under $T$.   Consider the ideal $I = (f) \subset R$, and
the test configuration given by the Rees algebra $\mathcal{R}(R,I)$.
The central fibre, which we denote by $Y_{0}$, of this test
configuration is determined by the ring
\begin{equation*}
 \bigoplus_{n\geq0}I^{n}/I^{n+1} \cong R/I \otimes_{\C} \C[w].
\end{equation*}
The grading on the latter ring is induced by the torus $T$ on the first
factor.  The torus action on the second factor is by weight $\alpha$ on
$w$.  Finally, the induced $\C^{*}$ action, denoted $\eta$, on the
central fibre is trivial on $R/I$, and acts with weight $1$ on $w$.  We
can compute the Donaldson-Futaki invariant of this test configuration
entirely in terms of the weight of the torus action on $f$, and the
Hilbert series of $R$.  First, we observe that if
$H_{R}(z_{0},\dots,z_{s})$ is the Hilbert series of $R$ with
multigrading induced by $T$, and $\alpha_{0},\dots,\alpha_{s}$ denote
the weight of $f$ under multigrading, then
\begin{equation*}
H_{R/I}(z_{0},\dots,z_{s}) = (1-z_{0}^{\alpha_{0}}\cdots
z_{s}^{\alpha_{s}})H_{R}(z_{0},\dots,z_{s})
\end{equation*}
is the Hilbert series of $R/I$.  This follows immediately from the
degree shifted exact sequence
\begin{equation*}
0\longrightarrow
R^{[\alpha_{0},\dots,\alpha_{s}]}\stackrel{f}{\longrightarrow} R
\longrightarrow R/I\longrightarrow0.
\end{equation*}
Since the Hilbert series is multiplicative on tensor products, we have
\begin{equation*}
H_{R/I \otimes_{R} \C[w]}(z_{0},\dots, z_{s}, \tilde{z}) =\frac{
(1-z_{0}^{\alpha_{0}}\cdots
z_{s}^{\alpha_{s}})}{(1-z_{0}^{\alpha_{0}}\cdots
z_{s}^{\alpha_{s}}\tilde{z})}H_{R}(z_{0},\dots,z_{s}).
\end{equation*}
Suppose that the index character of $Y$ with Reeb field $\xi \in \mt$ expands as
\begin{equation*}
F(\xi,t) = \frac{a_{0}(\xi)n!}{t^{n+1}} + \frac{a_{1}(\xi)(n-1)!}{t^{n}} + O(t^{1-n}).
\end{equation*}
then one easily obtains that the index character of the central fibre is given by
\begin{equation*}
\begin{aligned}
F(\xi-s\eta,t) &=
\frac{1-e^{-t\alpha(\xi)}}{(1-e^{-t(\alpha(\xi)-s)})}\left(\frac{a_{0}(\xi)n!}{t^{n+1}}
+ \frac{a_{1}(\xi)(n-1)!}{t^{n}} + O(t^{1-n})\right)\\
&=\frac{a_{0}(\xi)\alpha(\xi)n!}{(\alpha(\xi)-s)t^{n+1}}
+\frac{\alpha(\xi)(n-1)!}{(\alpha(\xi)-s)t^{n}}
\left[a_{1}(\xi)-\frac{s}{2}a_{0}(\xi)n\right]+\dots.
\end{aligned}
\end{equation*}
Applying Theorem~\ref{equivariant RR}, the Donaldson-Futaki invariant is
given by
\begin{equation}\label{DF Rees}
Fut(Y_{0},\xi,\eta) =
\frac{-1}{n(n+1)}\left[\frac{a_{1}(\xi)}{\alpha(\xi)} 
-\frac{n(n+1)}{2}a_{0}(\xi)\right].
\end{equation}
Until now, our developments have been completely general, and
equation~(\ref{DF Rees}) is the formula for the Donaldson-Futaki
invariant of the Rees algebra for a homogeneous principal ideal.  We now
employ the assumption that the $Y$ is Gorenstein and Calabi-Yau, and
$\Theta \in H^{0}(X, K_{X})$ is a non-vanishing section satisfying
$\mathcal{L}_{\xi}\Theta = i(n+1)\Theta$; equation~(\ref{Gore relation})
applies, and so
\begin{equation*}
Fut(Y_{0},\xi,\eta)= -\frac{1}{2}\left[\frac{1}{\alpha(\xi)}-1\right].
\end{equation*} 
In particular, we have the following theorem, which was proved for
rational Reeb vector fields in \cite{RT}.
\begin{thm}\label{L thm}
Let $(Y,\Theta)$ be an isolated Gorenstein singularity with link $L$,
and Reeb vector field $\xi$ satisfying $\mathcal{L}_{\xi}\Theta=
i(n+1)\Theta$.  If $Y$ admits a holomorphic function $f$ with
$\mathcal{L}_{\xi}f=i\lambda f$, and $\lambda <1$, then $(Y,\xi)$ is
K-unstable.
\end{thm}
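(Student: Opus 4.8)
The plan is to combine the explicit Donaldson-Futaki formula already derived for the Rees deformation with the K-semistability obstruction from Theorem~\ref{Don thm}. The immediately preceding computation establishes, for an isolated Gorenstein singularity with the normalization $\mathcal{L}_\xi\Theta = i(n+1)\Theta$, the clean formula
\begin{equation*}
Fut(Y_0,\xi,\eta) = -\frac{1}{2}\left[\frac{1}{\alpha(\xi)} - 1\right],
\end{equation*}
where $\eta$ is the $\C^*$-action on the central fiber of the Rees deformation associated to the principal ideal $I = (f)$. So the proof essentially reduces to bookkeeping: I must identify the weight $\alpha(\xi)$ in terms of the hypothesis $\mathcal{L}_\xi f = i\lambda f$, and then check the sign.

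First I would match conventions. The hypothesis $\mathcal{L}_\xi f = i\lambda f$ says precisely that $f$ is homogeneous of weight $\lambda$ for the circle action generated by $\xi$, i.e. $\alpha(\xi) = \lambda$, where $\alpha\in\mt^*$ is the $T$-weight of $f$. Since $f$ is a nonconstant regular function and $\xi$ is a Reeb field, Definition~\ref{Reeb field defn} guarantees $\alpha(\xi) = \lambda > 0$, so the expression $1/\alpha(\xi)$ makes sense and the Rees construction applies. Substituting into the formula gives
\begin{equation*}
Fut(Y_0,\xi,\eta) = -\frac{1}{2}\left[\frac{1}{\lambda} - 1\right] = \frac{1}{2}\cdot\frac{\lambda - 1}{\lambda}.
\end{equation*}

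Next I would read off the sign. Under the standing assumption $\lambda < 1$ (together with $\lambda > 0$), we have $\lambda - 1 < 0$ while $\lambda > 0$, so $Fut(Y_0,\xi,\eta) < 0$. This is a genuine, non-product test-configuration — it is the Rees deformation, whose central fiber is the normal cone of the hypersurface $\{f=0\}$, and which was shown earlier in this section to be flat over $\C[t]$ with a Reeb field induced from $\xi$. Therefore it is an admissible $T$-equivariant test-configuration in the sense required by the definition of K-semistability. Since the Donaldson-Futaki invariant is strictly negative, $(Y,\xi)$ fails the K-semistability inequality $Fut \geq 0$, hence is K-unstable, which is exactly the assertion of Theorem~\ref{L thm}.

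I do not anticipate a serious obstacle, since all the analytic and algebraic work has been front-loaded into the formula~\eqref{DF Rees} and the Gorenstein relation~\eqref{Gore relation}. The only points demanding care are conventional: verifying that the weight $\alpha(\xi)$ appearing in the Rees computation coincides with the eigenvalue $\lambda$ in $\mathcal{L}_\xi f = i\lambda f$ (a factor-of-$i$ / sign normalization inherited from the definition of the Reeb field as $\xi = J(r\partial_r)$), and confirming that the induced $\eta$ on the central fiber — which acts trivially on $R/I$ and with weight $1$ on the new variable $w$ — is precisely the $\C^*$-action for which~\eqref{DF Rees} was computed. Once these identifications are fixed, the statement is an immediate consequence of $c(n) > 0$ in Theorem~\ref{Don thm} and the strict negativity of the Futaki invariant.
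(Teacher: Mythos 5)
Your proposal is correct and follows essentially the paper's own route: the paper likewise proves Theorem~\ref{L thm} by specializing the Rees-deformation formula~\eqref{DF Rees} via the Gorenstein relation~\eqref{Gore relation} to $Fut(Y_{0},\xi,\eta)=-\tfrac{1}{2}\left[\alpha(\xi)^{-1}-1\right]$ (up to the positive factor $a_{0}(\xi)$), identifying $\alpha(\xi)=\lambda>0$, and reading off the strictly negative sign when $\lambda<1$. One cosmetic remark: K-instability follows from the definition of K-semistability alone once a test-configuration with negative Futaki invariant is exhibited, so your appeal to $c(n)>0$ in Theorem~\ref{Don thm} is needed only for the ensuing corollary on nonexistence of constant scalar curvature (Sasaki--Einstein) metrics, not for the theorem itself.
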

This gives the following corollary, which was first observed in \cite{GMSY}.
\begin{cor}
Let $(Y,\Theta)$ be an isolated Gorenstein singularity with link $L$,
and Reeb vector field $\xi$ satisfying $\mathcal{L}_{\xi}\Theta=
i(n+1)\Theta$.  If $Y$ admits a holomorphic function $f$ with
$\mathcal{L}_{\xi}f=i\lambda f$, and $\lambda <1$, then $(Y,\xi)$ does
not admit a compatible constant scalar curvature K\"ahler metric.  In
particular, $L$ does not admit a Sasaki-Einstein metric with Reeb field
$\xi$.
\end{cor}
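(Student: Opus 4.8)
The plan is to produce a single destabilizing test-configuration: by the definition of K-semistability, exhibiting one $T$-equivariant test-configuration with strictly negative Donaldson--Futaki invariant already forces $(Y,\xi)$ to be K-unstable. The natural candidate is the Rees deformation attached to the principal ideal $I=(f)$, whose construction, flatness over $\mathbb{C}[t]$, and central fibre I may take for granted from the discussion above. First I would record the bookkeeping: if $\alpha\in\mt^*$ is the $T$-weight of $f$, then $\mathcal{L}_\xi f = i\lambda f$ means exactly $\alpha(\xi)=\lambda$; since $\xi$ is a Reeb field and $f$ is non-constant we automatically have $\alpha(\xi)>0$, and by hypothesis $\alpha(\xi)=\lambda<1$.

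The core of the argument is the computation of the Donaldson--Futaki invariant of this configuration, which is carried out in the paragraph preceding the statement. I would begin from the short exact sequence $0 \to R \stackrel{f}{\longrightarrow} R \to R/I \to 0$, degree-shifted by the weight of $f$, to relate the Hilbert series of $R$ and of $R/I$; multiplicativity of the Hilbert series on the tensor product $R/I\otimes_{\mathbb{C}}\mathbb{C}[w]$ describing the central fibre then gives, after introducing the variable recording the induced $\mathbb{C}^*$-action $\eta$ and passing to the index character, the expansion of $F(\xi-s\eta,t)$. Applying Theorem~\ref{equivariant RR} extracts the relevant coefficients and produces the general formula~\eqref{DF Rees}, so at this point I would simply invoke~\eqref{DF Rees}.

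With the formula available, the Gorenstein/Calabi--Yau hypothesis enters through the relation $a_1(\xi)=\tfrac{n(n+1)}{2}a_0(\xi)$ of~\eqref{Gore relation}, valid because $\mathcal{L}_\xi\Theta=i(n+1)\Theta$. Substituting this into~\eqref{DF Rees} collapses the bracket and yields
\begin{equation*}
Fut(Y_0,\xi,\eta) = -\frac{a_0(\xi)}{2}\left[\frac{1}{\alpha(\xi)}-1\right].
\end{equation*}
Because $a_0(\xi)>0$ by Theorem~\ref{ind char exp}, the sign of the invariant is governed entirely by the bracket. Since $0<\alpha(\xi)=\lambda<1$, we have $\tfrac{1}{\lambda}-1>0$, hence $Fut(Y_0,\xi,\eta)<0$. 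A strictly negative Futaki invariant for a legitimate test-configuration contradicts the inequality required for K-semistability, so $(Y,\xi)$ is K-unstable.

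I expect the only real subtlety to lie not in the final sign computation but in verifying that the Rees deformation is admissible in the sense of Definition~\ref{test config}: that its central fibre is the normal cone $\Spec(R/I\otimes_\mathbb{C}\mathbb{C}[w])$, that $\xi$ descends to a Reeb field there, and that flatness holds. All of these points were settled when the Rees algebra was constructed above, so the theorem reduces to assembling~\eqref{DF Rees}, \eqref{Gore relation}, and the positivity $a_0(\xi)>0$ of Theorem~\ref{ind char exp}.
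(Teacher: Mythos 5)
Your computation is correct and follows the same route as the paper's proof of Theorem~\ref{L thm}: the Rees deformation of the principal ideal $I=(f)$, the Hilbert series manipulations giving \eqref{DF Rees}, the Gorenstein relation \eqref{Gore relation}, and the sign analysis from $0<\alpha(\xi)=\lambda<1$. (Your version of the final formula, $Fut(Y_0,\xi,\eta)=-\frac{a_0(\xi)}{2}\left[\frac{1}{\alpha(\xi)}-1\right]$, keeps the factor $a_0(\xi)$ that the paper's display drops; since $a_0(\xi)>0$ by Theorem~\ref{ind char exp} this does not affect the sign, and your bookkeeping is the more accurate one.)

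However, what you have proved is Theorem~\ref{L thm} --- that $(Y,\xi)$ is K-unstable --- and not the corollary you were asked to prove, which asserts the non-existence of a compatible constant scalar curvature K\"ahler metric and, in particular, of a Sasaki-Einstein metric on the link. Your argument stops at ``$(Y,\xi)$ is K-unstable'' and never converts instability into metric non-existence. That conversion is exactly the paper's main result: by Theorem~\ref{Don thm}, any compatible metric $\Omega$ satisfies $\Vert\eta\Vert_\xi\cdot Cal_Y(\Omega)\geq -c(n)\,Fut(Y_0,\xi,\eta)$, so a compatible constant scalar curvature metric, for which $Cal_Y(\Omega)=0$, would force $Fut(Y_0,\xi,\eta)\geq 0$, contradicting the strict negativity you computed; equivalently, one invokes Corollary~\ref{main cor} in contrapositive form. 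For the last clause one also needs the bridge between the Sasakian and affine pictures: a Sasaki-Einstein metric $g$ on $L$ with Reeb field $\xi$ has constant scalar curvature, and its cone metric $\Omega$ is compatible with $\xi$ and satisfies $Cal_Y(\Omega)=\frac{1}{4(2n+2)^{1/2}}Cal_S(g)=0$ by the Proposition of Section~\ref{sec:cal}, so it is excluded by the first part. These missing steps are short given the paper's results, but they are precisely the content that distinguishes this corollary from Theorem~\ref{L thm}, so as written your proposal establishes a different statement from the one in question.
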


Note that even if $Y$ is not a Gorenstein singularity, from \eqref{DF
Rees} we obtain a lower bound on $\alpha(\xi)$ in terms of the ratio
$a_1/a_0$ whenever $\xi$ is a K-semistable Reeb field on $Y$.

\end{document}